\theoremstyle{plain}
\newtheorem{thm}{Theorem}[section]
\newtheorem{prop}[thm]{Proposition}
\newtheorem{lem}[thm]{Lemma}
\theoremstyle{remark}
\newtheorem{rem}[thm]{Remark}
\newcommand{\C}[1]{\hbox{$\mathcal #1$}}
\newcommand{\pr}{\hbox{${\bf P}$}}
\long\def\eatit#1{}
\begin{document}

\title{Configuration types and cubic surfaces}

\author{Elena Guardo and Brian Harbourne}

\address{Elena Guardo\\
Dipartimento di Matematica e Informatica\\
Viale A. Doria 6, 95100\\
Catania, Italy\\
\url{http://www.dmi.unict.it/~guardo}}
\email{guardo@dmi.unict.it}

\address{Brian Harbourne\\
Department of Mathematics\\
University of Nebraska-Lincoln\\
Lincoln, NE 68588-0130\\
USA\\
\url{http://www.math.unl.edu/~bharbour/}}
\email{bharbour@math.unl.edu}

\date{May 3, 2008}

\begin{abstract}
This paper is a sequel to the paper \cite{refGH}.
We relate the matroid notion of a combinatorial geometry
to a generalization which we call a configuration type.
Configuration types arise when one classifies the
Hilbert functions and graded Betti numbers for
fat point subschemes supported at $n\le8$ essentially distinct points of the
projective plane. Each type gives rise to a surface $X$ obtained by blowing up the points.
We classify those types such that $n=6$ and $-K_X$ is nef. The surfaces obtained
are precisely  the desingularizations of the normal cubic surfaces.
By classifying configuration types we recover in all characteristics the
classification of normal cubic surfaces, which is well-known
in characteristic 0 \cite{refBW}.
As an application of our classification of configuration types,
we obtain a numerical procedure for determining
the Hilbert function and graded Betti numbers for the ideal
of any fat point subscheme  $Z=m_1p_1+\cdots+m_6p_6$
such that the points $p_i$ are essentially distinct and $-K_X$ is nef,
given only the configuration type of the points
$p_1,\ldots,p_6$ and the coefficients $m_i$.
\end{abstract}

\thanks{Acknowledgments: We would like to thank the GNSAGA, the NSA, the NSF and the
Department of Mathematics at UNL for their support of the authors' research and
of E. Guardo's visits in 2003, 2004 and 2005 while this work was carried out.}

\keywords{Fat points, free resolution, cubic surface, matroid, Hilbert function}

\subjclass[2000]{Primary 14C20, 14J26, 13D02; Secondary 14N20, 14Q99.}

\maketitle

\section{Introduction}\label{intro}

\subsection*{Matroids and Combinatorial Geometries}
The combinatorial classification of points in projective space
leads one to the concept of combinatorial geometries.
Intuitively, a combinatorial geometry of rank $N+1$ or less and size $n$
is an abstract specification
of linear dependencies among a set of $n$ points spanning a space of dimension at most $N$.
Formally, a \emph{combinatorial geometry\/} is a matroid
without loops or parallel elements \cite{refHandbook}.
We are interested in the case of $n$ points in the projective plane,
in which case one can regard a combinatorial geometry
as just being a collection of subsets of the set $\{1,\dots,n\}$,
where each subset has at least two elements and
two subsets which have two elements in common
must be equal. We say that a given combinatorial geometry
is \emph{representable\/} over a field $k$ (in this paper the field $k$ will
be assumed to be algebraically closed, but not necessarily of characteristic 0)
if there is a collection of distinct points $p_1,\ldots,p_n\in \pr^2_k$ such that
a subset $\{i_1,\ldots,i_r\}\subseteq \{1,\ldots,n\}$ is an element of the combinatorial
geometry if and only if $\{p_{i_1},\ldots,p_{i_r}\}$ is a maximal
collinear subset of $\{p_1,\ldots,p_n\}$.

\subsection*{Combinatorial Geometries as Matrices}
We can think of a combinatorial geometry
of rank up to 3 and size $n$ with $g$ elements
as specifying a correspondence between a set of $g$
lines and $n$ points, where each line is defined by a maximal collinear subset
of the points. If we enumerate the lines, then specifying
the combinatorial geometry is equivalent to giving a 0-1 matrix $M$
where the entry in row $i$ and column $j$ is a 1 if and only if
the $i$th line contains the $j$th point. (Although any two points
determine a line, it is convenient to ignore
any row with exactly two 1's, and so this is what we will do.
This does no harm, since if we know all maximal collinear subsets
containing more than two points, we can recover all of
the maximal two point subsets. Since there typically are a lot of
maximal two point subsets, it is impractical to include them in the
matrix $M$.)

Thus we can regard combinatorial geometries on $n$ points in the plane
as being matrices $M$ with $n$ columns, such that each entry
of each row is either a 0 or a $1$, the sum of the entries
for a row is always at least $3$, and the dot product
of two different rows is either 0 or $1$. (If no three points
are collinear, then the matrix would have no rows.)

\subsection*{An Algebraic Geometric Perspective on Combinatorial Geometries}
A combinatorial geometry can be regarded as
telling us when more than the expected number
of points are to lie on a given line, the expected number being 2.
But in algebraic geometry we are also interested in
the possibility of points being special with respect
to curves of higher (and lower) degrees. Thus 6 points
on a conic are special (in which case the points
are special with respect to a curve of degree 2).
Similarly, it is special to have one point be infinitely near another
(in which case we can regard the points as being special
with respect to a curve of degree 0).

For the purposes of this paper, we now want to introduce
an alternative, algebraic geometric, approach
to combinatorial geometries, which we will then generalize
in the case of $n=6$ points
to sets of points being special with respect to curves other than lines.
Consider a set of distinct points $p_1,\ldots,p_n\in \pr^2_k$ which represents
a given combinatorial geometry, and let $\pi:X\to \pr^2_k$
be the morphism obtained by blowing up the points.
The divisor class group $\hbox{Cl}(X)$ is the free abelian
group on the divisors classes $L$, $E_1,\ldots,E_n$, where $L$ is the pullback
to $X$ of the class of a line on $\pr^2_k$ and
$E_i$ is the class of $\pi^{-1}(p_i)$.
The group $\hbox{Cl}(X)$ supports an intersection form;
it is the bilinear form
defined by requiring that the classes
$L$, $E_1,\ldots,E_n$ be orthogonal with $L^2=1$
and $E_i^2=-1$ for all $i$.

To ignore trivial cases, assume that $n>1$.
If the points $p_{i_1},\dots,p_{i_r}$, with $r>1$,
are collinear and none of the other points $p_i$ are on the same line,
then the class of the proper transform of the
line through those points is $C=L-E_{i_1}-\cdots-E_{i_r}$,
so the classes of the proper transforms of the lines
corresponding to the elements of the combinatorial geometry
are precisely the classes of all prime divisors $C$ on $X$
with $C^2<0$ and $C\cdot L=1$. If we construct a matrix $M'$ by
changing the sign of each nonzero entry of $M$ and then
prepending a column of 1's to the left side of $M$,
we obtain a matrix $M'$ whose rows specify (in terms of the basis
$L$, $E_1,\ldots,E_n$) the set of
all classes of prime divisors $C$ on $X$ such that
$C^2<0$ and $C\cdot L=1$ (with classes having $C^2=-1$
suppressed, corresponding to suppression in $M$ of rows
with exactly two 1's in them).

For example, given 5 points $p_1,\ldots,p_5$ such that
the maximal collinear subsets (ignoring two point subsets)
are points 1, 4 and 5 and points 2, 3 and 4, the matrices $M$
and $M'$ are:

$$\hbox{
$M=\left(\begin{matrix}
1 & 0 & 0 & 1 & 1\\
0 & 1 & 1 & 1 & 0\\
\end{matrix}\right)$
\hbox to.5in{\hfil}
$M'=\left(\begin{matrix}
1 & -1 & 0 & 0 & -1 & -1\\
1 & 0 & -1 & -1 & -1 & 0\\
\end{matrix}\right)$.}$$

The rows of $M'$ specify the classes
$L-E_1-E_4-E_5$ and $L-E_2-E_3-E_4$.

Thus we now can regard combinatorial geometries on $n$ points in the plane
as being matrices $M'$ with $n+1$ columns, where the first entry in each row is
a 1, each remaining entry is either a 0 or a $-1$, the sum of the entries of each row
(i.e., the intersection product of a row with itself,
with respect to the bilinear form defined above)
is always at most $-2$, and the intersection product
of two different rows is either 1 or 0.

\subsection*{Infinitely near points, Blow ups, the Intersection form and Exceptional configurations}
We now recall the notion of points being infinitely near.
Let $\pi: X\to \pr^2$ be the morphism
obtained as a sequence of blow ups of points in the following way.
Let $p_1\in X_0=\pr^2$,
and let $p_2\in X_1$, $\ldots$, $p_n\in X_{n-1}$, where, for
$0\le i\le n-1$,
$\pi_{i+1}:X_{i+1}\to X_i$ is the blow up of $p_{i+1}$. We will
denote $X_n$ by $X$
and the composition $X=X_n\to\cdots\to X_0=\pr^2$ by $\pi$. We say that the points
$p_1,\ldots,p_n$ are \emph{essentially distinct\/} points of $\pr^2$ \cite{refFreeRes};
note for $j>i$ that we may have $\pi_i\circ\cdots\circ\pi_{j-1}(p_j)=p_i$,
in which case we say $p_j$ is infinitely near $p_i$.
(If no point is infinitely near another, the points are just distinct points of $\pr^2$
and $X$ is just the surface obtained by blowing
the points up in a particular order, but the order does not matter.
If the points are only essentially distinct, then $p_i$
needs to be blown up before $p_j$ whenever $p_j$ is infinitely near
$p_i$.)

We denote by $E_i$ the class of the
1-dimensional scheme-theoretic fiber
of $X=X_n\to X_{i-1}$  over $p_i$ and the pullback to $X$ of the
class of a line in $\pr^2$ by $L$. As before, the classes
$L,E_1,\ldots,E_n$ form a basis over the integers
of the divisor class group
$\hbox{Cl}(X)$, which is a free abelian group of rank
$n+1$. We call such a basis an \emph{exceptional configuration\/},
which as before is an orthogonal basis for $\hbox{Cl}(X)$ with respect to the
intersection form.

\subsection*{Ordered and unordered Configuration types}
We saw above that lines through two or more points
give rise to classes of prime divisors of negative self-intersection.
Similarly, if instead the points $p_{i_1},\dots,p_{i_r}$ lie
on an irreducible conic and none of the other points lie
on that same conic, then the class of the proper transform of that
conic is $D=2L-E_{i_1}-\cdots-E_{i_r}$ and $D$ is the class of a prime divisor
of self-intersection $D^2=4-r$, hence negative if $r>4$.
Instead of just lines through 2 or more points, in
the context of algebraic geometry what is of interest
is more generally the set of all prime divisors of negative self-intersection.

We can formalize this generalized notion as a \emph{configuration type\/}
of points. Up to equivalence, an \emph{ordered\/} configuration type of $n$ points
in the plane is a matrix $T$ with $n+1$ columns whose rows
satisfy two conditions: negative self-intersection and
pairwise nonnegativity (explained below).
Two matrices satisfying these two conditions will be regarded as giving the same
ordered configuration type if one matrix can be obtained from the
other by permuting its rows. (We will say that
two matrices satisfying the two conditions will be regarded as giving the same
\emph{unordered\/} configuration type if one matrix can be obtained from the
other by permuting either its rows or columns or both.)

The two conditions come from our wanting the rows to specify the
coefficients, with respect to the basis $L,E_1,\ldots,E_n$, of
classes of prime divisors of negative self-intersection. (Although
in general there can be infinitely many classes of prime divisors
of negative self-intersection, if $n\le8$, it is known there are
only finitely many. See Lemma \ref{NEGisfinite} for the case of interest
here, $n=6$; the case for any $n\le 8$ is similar.)

Thus if $(d,m_1,\ldots,m_n)$ is a row of the matrix $T$,
we require $d^2-m_1^2-\cdots-m_n^2<0$ (negative self-intersection), and if
$(d',m'_1,\ldots,m'_n)$ is another row of the matrix,
we require $dd'-m_1m'_1-\cdots-m_nm'_n\ge0$
(pairwise nonnegativity), corresponding to
an intersection theoretic version of
Bezout's theorem, saying that $C\cdot D\ge0$
if $C$ and $D$ are prime divisors with $C\ne D$.
We will say a configuration type $T$ is
\emph{representable\/} if there is a set of essentially distinct
points $p_1,\ldots,p_n$ giving a surface $X$ such that the rows of $T$
are (in terms of the exceptional configuration $L,E_1,\ldots,E_n$ for $X$)
the classes of all prime divisors of negative self-intersection on $X$.

\subsection*{Goals and Motivation}
The goal of this paper is to classify all of the configuration types
for $n=6$ essentially distinct points of $\pr^2$ which when blown up
give a surface $X$ for which $-K_X$ is nef, and to determine
representability for each configuration type.
In order to formally write down possible matrices, we must
have a set $S$ of possible vectors $(d,m_1,\ldots,m_n)$
to draw from. In principle, $S$ should consist of all coefficient
vectors which occur for prime divisors of negative self-intersection for any
prime divisor that occurs for any choice of the points $p_i$.
Then we can attempt to write down all
possible matrices satisfying the two given conditions
(of negative self-intersection and pairwise nonnegativity)
where each row is chosen from $S$. Having written down
all possible matrices, we can consider representability:
i.e., for which matrices is there an algebraically closed
field $k$ and an actual set of points $p_i$ in $\pr^2_k$
such that the set of prime divisors on $X$ is
exactly that specified by the matrix.

The underlying motivation for carrying out this classification is that,
if $n\le8$, then two sets of points, $p_1,\ldots,p_n$ and $p'_1,\ldots,p'_n$,
have the same ordered configuration type if and only if, for all choices of nonnegative
integers $m_1,\ldots,m_n$, the Hilbert functions of the fat point subschemes
$m_1p_1+\cdots+m_np_n$ and $m_1p'_1+\cdots+m_np'_n$ are the same
\cite{refGH}. (We recall the notions of fat points, their ideals and their Hilbert functions
in Section \ref{bkgd}, and their
graded Betti numbers in Section \ref{resols}.)

\subsection*{Previous Work}
We classified the configuration types for sets of $n=6$ distinct points of $\pr^2$ in \cite{refGH},
and we also showed that if $p_1,\ldots,p_n$ and $p'_1,\ldots,p'_n$
have the same ordered configuration type, then for any nonnegative integers
$m_1,\ldots,m_n$, the graded Betti numbers of the ideals
$I(m_1p_1+\cdots+m_np_n)$ and $I(m_1p'_1+\cdots+m_np'_n)$
defining the fat point subschemes are the same.
(In \cite{refGH} for efficiency we listed only the unordered configuration types,
of which there are 11. These 11 comprise 353 ordered configuration types, but
two ordered types with the same unordered type differ only in the indexation of
the points. For example, one of the 11 is the situation where 3 points in a set of 6 points
is collinear, and otherwise no more than 2 of the 6 points is collinear. There are $\binom{6}{3}=20$
essentially different ways to number the 6 points, so this one unordered type comprises
20 ordered types.
Thus there is little reason to explicitly list the ordered types,
and we will normally only explicitly list unordered types,
as was done in \cite{refGH}.)

Since the graded Betti numbers determine the Hilbert function, and
since knowing the Hilbert functions of $m_1p_1+\cdots+m_6p_6$ for
all choices of the $m_i$ allows one to determine the set of prime
divisors on $X$ of negative self-intersection and hence to recover
the configuration type of the points, this shows that a
classification of configuration types of 6 distinct points in the
plane is actually a classification of the points up to graded
Betti numbers (i.e., where we regard two sets of 6 points
$p_1,\ldots,p_6$ and $p'_1,\ldots,p'_6$ as equivalent if the
graded Betti numbers of $I(m_1p_1+\cdots+m_6p_6)$ and
$I(m_1p'_1+\cdots+m_6p'_6)$ are the same for all choices of
nonnegative integers $m_i$).

\subsection*{Results}
In this paper we consider the classification of 6 essentially
distinct points, but for both technical and practical reasons we
do so only under the restriction that the anticanonical divisor
$-K_X$ on $X$ is nef. With this restriction, we show that every
type is representable over every algebraically closed field $k$
and we show that a classification by type is equivalent to a
classification up to graded Betti numbers. We also give an
explicit procedure for determining the graded Betti numbers for
the ideal $I(Z)$ for any fat point subscheme
$Z=m_1p_1+\cdots+m_6p_6$ supported at the six points, given only
the coefficients $m_i$ and the ordered configuration type of the
points. While this procedure can easily be carried out by hand, an
awk script automating the procedure can be run over the web at
\url{http://www.math.unl.edu/~bharbourne1/6ptsNef-K/6reswebsite.html}.
For some examples, see Section \ref{exmpls}.

The problem of determining all possible Hilbert functions and
graded Betti numbers for arbitrary fat point subschemes
$2p_1+\cdots+2p_n$, and of determining the configurations of the
points that give rise to the different possibilities, was raised
in \cite{refGMS}. Thus \cite{refGH} completely solves the problem
for $n=6$ in the original context of distinct points, not only for
double points but for fat point schemes $m_1p_1+\cdots+m_6p_6$
with $m_i$ arbitrary. What we do here likewise completely solves
the problem for arbitrary $m_i$, in the case of 6 essentially
distinct points under the condition that $-K_X$ is nef. Indeed,
what we find is that there are 90 different unordered
configuration types, corresponding to equivalence classes of
matrices whose rows are drawn from a certain set $S$ as discussed
above and given explicitly in Lemma \ref{NEGisfinite}. (If we were to
remove the restriction that $-K_X$ is nef, we would, in addition
to what is specified in Lemma \ref{NEGisfinite}, also have to include in
$S$ the coefficient vectors of all classes of the form
$E_i-E_{j_1}-\cdots-E_{j_r}$ for all subsets
$\{j_1,\ldots,j_r\}\subsetneq\{1,\ldots,6\}$ with $r>1$ and $i<j_l$
for all $l$, and also all classes of the form
$L-E_{i_1}-\cdots-E_{i_l}$ for all $0<i_1<\ldots<i_l\le6$ with
$l>3$. This results in many more configuration types. Having
$-K_X$ be nef also affords technical simplifications in computing
generators for dual cones given generators for a cone, which we
need to do for our method of proving that the graded Betti numbers
of $I(m_1p_1+\cdots+m_6p_6)$ depend only on the coefficients $m_i$
and the configuration type of the points $p_i$.)

The condition that $-K_X$ be nef is fairly reasonable,
both algebraically and geometrically. Algebraically,
one of the cases of most interest is the uniform case,
i.e., cases where the fat point subscheme $Z$ is of the form
$Z=mp_1+\cdots+mp_6$. Also, one typically considers
schemes $Z$ only which satisfy the proximity inequalities
(see Section \ref{resols}), and if $-K_X$ is nef, then a uniform $Z$
satisfies the proximity inequalities if and only if $m\ge0$.

Geometrically, the surfaces obtained by blowing up 6 essentially
distinct points of $\pr^2$ such that $-K_X$ is nef are precisely
the surfaces which occur by resolving the singularities of normal
cubic surfaces in $\pr^3$. Thus this paper can be regarded as a
contribution to the long history of work on cubic surfaces. The
classification of normal cubic surfaces up to the types of their
singularities (as given by the Dynkin diagrams of the singular
points) is classical, at least in characteristic 0 (see
\cite{refBW}). What is new here is first the (relatively easy)
classification of the corresponding configuration types of points
in $\pr^2$. (Resolving the singularities of a normal cubic surface
gives a surface $X$ for which $-K_X$ is nef, but each $X$
typically has several birational morphisms to $\pr^2$, and each
such morphism gives a set of 6 points in $\pr^2$ which when blown
up give $X$. Thus typically several configuration types occur for
each Dynkin diagram.) It is much harder to show that the
configuration type of the points $p_i$ is enough together with the
coefficients $m_i$ to determine the graded Betti numbers of
$I(m_1p_1+\cdots+m_6p_6)$. When the points are distinct we showed
this in \cite{refGH} without requiring $-K_X$ be nef. What is new
here is that we show this for points that can be infinitely near,
but under the assumption that $-K_X$ is nef. (It was already
known, as a consequence of Theorem 8 of \cite{refBHProc}, that the
configuration type of the points $p_i$ is enough, together with
the coefficients $m_i$, to determine the Hilbert function of
$I(m_1p_1+\cdots+m_np_n)$ for any $n\le 8$ essentially distinct
points of $\pr^2$, whether $-K_X$ is nef or not.)

\section{Background}\label{bkgd}

We recall here some of the background we will need
on fat points and on surfaces obtained by blowing up
essentially distinct points of $\pr^2$. We work over an
algebraically closed field $k$ of arbitrary characteristic.

A fat point subscheme $Z=m_1p_1+\cdots+m_np_n$ usually is considered
in the case that the points $\{p_i\}$ are distinct points.
In particular, let $p_1,\ldots,p_n$ be distinct points
of $\pr^2$. Given nonnegative integers $m_i$, the fat point
subscheme $Z=m_1p_1+\cdots+m_np_n\subset \pr^2$ is
defined to be the subscheme defined by the ideal
$I(Z)=I(p_1)^{m_1}\cap\cdots\cap I(p_n)^{m_n}$, where
$I(p_i)\subseteq R=k[\pr^2]$ is the ideal generated by all forms
(in the polynomial ring $R$ in three variables over the field $k$)
vanishing at $p_i$. The \emph{support\/} of $Z$ consists of
the points $p_i$ for which $m_i$ is positive.
For another perspective, let $\C I_Z$
be the sheaf of ideals defining $Z$ as a subscheme of $\pr^2$.
Now let $X$ be obtained by blowing up the points $p_i$.
Given a divisor $F$ we will denote the
corresponding line bundle by $\C O_X(F)$. With this convention,
$\C I_Z=\pi_*(\C O_X(-m_1E_1-\cdots-m_nE_n))$
and the stalks of $\C I_Z$ are complete ideals (as
defined in \cite{refZ} and \cite{refZS})
in the local rings of the structure sheaf of $\pr^2$.
We can recover $I(Z)$ from $\C I_Z$ since
the homogeneous component $I(Z)_t$ of $I(Z)$ of degree $t$
is just $H^0(X, \C I_Z(t))$.

We can just as well consider essentially distinct points
$p_1,\ldots,p_n\in\pr^2$. Again let $\pi:X\to \pr^2$ be given by
blowing up the points $p_i$, in order. We define the fat point
subscheme $Z=m_1p_1+\cdots+m_np_n$ to be the subscheme whose ideal
sheaf is the coherent sheaf of ideals $\pi_*(\C
O_X(-m_1E_1-\cdots-m_nE_n))$. Note that the stalks of $\pi_*(\C
O_X(-m_1E_1-\cdots-m_nE_n))$ are again complete ideals in the
stalks of the local rings of the structure sheaf of $\pr^2$, and,
conversely, if \C I is a coherent sheaf of ideals on $\pr^2$ whose
stalks are complete ideals and if \C I defines a 0-dimensional
subscheme, then there are essentially distinct points
$p_1,\ldots,p_n$ of $\pr^2$ and integers $m_i$ such that with
respect to the corresponding exceptional configuration we have $\C
I=\pi_*(\C O_X(-m_1E_1-\cdots-m_nE_n))$ (see \cite{refAppendix},
\cite{refZ} and \cite{refZS} for more details). As before we
define $I(Z)$ to be the ideal in $R$ given as $I(Z)=\oplus_{t\ge
0} H^0(X, \C I_Z(t))$. The Hilbert function of a homogenous ideal
$I\subseteq R$ is just the function $h_I(t)=\hbox{dim } I_t$ giving
the vector space dimension of the homogeneous component $I_t$ of
$I$ as a function of the degree $t$. The Hilbert function of a fat
point subscheme $Z$ will be the function $h_Z(t)=\hbox{dim}
(R/I)_t$ giving the vector space dimension of the homogeneous
components of the quotient ring $R/I$ as a function of degree.
Note that $h_{I(Z)}(t)+h_Z(t)=\binom{t+2}{2}$. (We recall in
Section \ref{resols} the notions of the minimal free resolution of $I(Z)$
and its graded Betti numbers.)

Every smooth projective surface $X$ with a birational morphism
to $\pr^2$ arises as a blow up of $n$ essentially
distinct points, where $n$ is uniquely determined
by $X$, since $n+1$ is the rank of $\hbox{Cl}(X)$ as a free abelian
group. Since here we are interested in the case $n=6$,
we will always hereafter assume that $n=6$.
We will also mainly be interested in those $X$ for which the anticanonical
class is nef. The anticanonical class has an intrinsic definition,
but in terms of an exceptional configuration it is
always $3L-E_1-\cdots-E_n$. A divisor (or divisor class) $F$ being \emph{nef\/}
means that $F\cdot D\ge 0$
whenever $D$ is the class of an effective divisor
(with \emph{effective\/} meaning that $D$ is a nonnegative integer linear combination
of reduced irreducible curves).

We now recall the connection of normal cubic surfaces
with blow ups $X$ of $\pr^2$ at 6 essentially distinct
points such that $-K_X$ is nef. If $-K_X$ is nef, by
Lemma \ref{NEGisfinite} the linear system
$|-K_X|$ has no base points so it
defines a morphism $\phi_{|-K_X|}:X\to\pr^3$,
whose image is a cubic surface.
By Proposition 3.2 of \cite{refBirMor}, the image of $\phi_{|-K_X|}$
is normal, obtained by contracting
to a point every prime divisor orthogonal to $-K_X$
(i.e., every smooth rational curve of self-intersection $-2$).
In fact, the images of the $(-2)$-curves are rational double points,
and the inverse image of each singular point is a minimal resolution
of the singularity.
It is not hard to check that the subgroup $K_X^\perp\subsetneq\hbox{Cl}(X)$
of all divisor classes orthogonal to $K_X$ is negative definite.
Thus Theorem 2.7 and Figure 2.8, both of \cite{refA}, apply;
i.e., the intersection graph of a fiber over a singular point
is a Dynkin diagram of type $A_i$, $D_i$ or $E_i$.
The combinations of Dynkin diagrams that occur
for the singularities on a single surface are well known.
A determination in characteristic 0 is given in \cite{refBW}.
We recover that result for any characteristic; see
Table \ref{configtable}.

To state the next result, let $\hbox{NEG}(X)$
denote the set of classes of prime divisors of
negative self-intersection on a surface $X$
obtained by blowing up 6 essentially distinct points of $\pr^2$.
Let $\C B=\{E_i: i>0\}$
($\C B$ here is for \emph{blow up} of a point),
$\C V=\{E_i-E_{i_1}-\cdots-E_{i_r}: r\ge 1, 0<i<i_1<\cdots<i_r\le 6\}$
($\C V$ here is for \emph{vertical}),
$\C L=\{L-E_{i_1}-\cdots-E_{i_r}: r\ge 2, 0<i_1<\cdots<i_r\le 6\}$
($\C L$ here is for points on a \emph{line}), and
$\C Q=\{2L-E_{i_1}-\cdots-E_{i_r}: r\ge 5, 0<i_1<\cdots<i_r\le 6\}$
($\C Q$ here is for points on a conic, defined by a \emph{quadratic} equation).
Also, let $\C B'=\C B$, $\C V'=\{E_i-E_j:  0<i<j\le 6\}$,
$\C L'=\{L-E_i-E_j: 0<i<j\le 6\}\cup\{L-E_i-E_j-E_k: 0<i<j<k\le 6\}$, and
$\C Q'=\C Q$, and
let $\C V''=\C V'$,
$\C L''=\{L-E_i-E_j-E_k: 0<i<j<k\le 6\}$, and
$\C Q''=\{2L-E_1-\cdots-E_6\}$.

\begin{lem}\label{NEGisfinite}
Let $X$ be obtained by blowing up
6 essentially distinct points of $\pr^2$. Then the following hold:
\begin{itemize}
\item[(a)] $\hbox{NEG}(X)\subseteq \C B\cup\C V\cup\C L\cup\C Q$,
and every class in $\hbox{NEG}(X)$ is the class
of a smooth rational curve;
\item[(b)] if moreover $-K_X$ is nef, then
$\hbox{NEG}(X)\subseteq \C B'\cup\C V'\cup\C L'\cup\C Q'$;
\item[(c)] for any nef $F\in\hbox{Cl}(X)$, $F$ is effective
(hence $h^2(X, F)=0$ by duality), $|F|$ is base point free,
$h^0(X, F) = (F^2-K_X\cdot F)/2 + 1$ and $h^1(X, F)=0$;
\item[(d)] $\hbox{NEG}(X)$ generates the subsemigroup
$\hbox{EFF}(X)\subsetneq\hbox{Cl}(X)$ of classes of effective divisors; and
\item[(e)] any class $F$ is nef if and only if
$F\cdot C\ge0$ for all $C\in \hbox{NEG}(X)$.
\end{itemize}
\end{lem}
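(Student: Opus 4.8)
The plan is to prove the classification (a) first, deduce (b) as an immediate specialization, then establish the cohomological statement (c), and finally obtain the cone statements (d) and (e). Throughout I use Riemann--Roch on the rational surface $X$ (so that $\chi(\mathcal O_X)=1$, $K_X=-3L+E_1+\cdots+E_6$ and $K_X^2=3$), adjunction in the form $C^2+K_X\cdot C=2p_a(C)-2$, Serre duality, Hodge index (which makes $K_X^\perp$ negative definite since $K_X^2>0$), and the fact that $L=\pi^*\mathcal O(1)$ is nef. I first record that $-K_X$ is effective: since $h^2(-K_X)=h^0(2K_X)=0$ and $\chi(-K_X)=1+K_X^2=4$, Riemann--Roch gives $h^0(-K_X)\ge4>0$; hence $F\cdot(-K_X)\ge0$ for every nef $F$.

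For (a), let $C$ be prime with $C^2<0$ and put $d=C\cdot L\ge0$. If $d=0$ then $C$ is contracted by $\pi$, so it is one of the prime exceptional divisors of the blow-up of essentially distinct points; these are smooth rational curves of class $E_i$ or $E_i-E_{i_1}-\cdots-E_{i_r}$, i.e.\ lie in $\mathcal B\cup\mathcal V$. If $d\ge1$ then $\bar C=\pi(C)$ is an irreducible plane curve of degree $d$ and $C$ is its strict transform, so with $m_i$ the multiplicity sequence at the $p_i$ one has $C^2=d^2-\sum m_i^2$ and the genus bound $\sum\binom{m_i}{2}\le\binom{d-1}{2}$. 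Since $C^2<0$ gives $\sum m_i^2\ge d^2+1$, we get $\sum m_i=\sum m_i^2-\sum m_i(m_i-1)\ge(d^2+1)-(d-1)(d-2)=3d-1$; combined with $\sum m_i^2\le(d-1)(d-2)+\sum m_i$ and Cauchy--Schwarz $(\sum m_i)^2\le6\sum m_i^2$ over the six points, this forces $3d^2-6d-5\le0$, hence $d\le2$. The genus bound then makes every $m_i\le1$, so $C=L-E_{i_1}-\cdots-E_{i_r}\in\mathcal L$ (with $r\ge2$ from $C^2<0$) when $d=1$, and $C=2L-E_{i_1}-\cdots-E_{i_r}\in\mathcal Q$ (with $r\ge5$) when $d=2$; in both cases $\bar C$ is a line or a smooth conic through simple or infinitely near points, so $C\cong\pr^1$, proving the smoothness claim. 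Part (b) is immediate from (a): if $-K_X$ is nef then $-K_X\cdot C\ge0$ for $C\in\hbox{NEG}(X)$, and the values $-K_X\cdot(E_i-E_{i_1}-\cdots-E_{i_r})=1-r$, $-K_X\cdot(L-E_{i_1}-\cdots-E_{i_r})=3-r$, and $-K_X\cdot(2L-E_{i_1}-\cdots-E_{i_r})=6-r$ cut the admissible $r$ down exactly to the ranges defining $\mathcal B'\cup\mathcal V'\cup\mathcal L'\cup\mathcal Q'$.

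For (c) the vanishing $h^2(X,F)=0$ comes for free: for an ample $H$ we have $(K_X-F)\cdot H=K_X\cdot H-F\cdot H<0$ because $(-K_X)\cdot H>0$ and $F\cdot H\ge0$, so $K_X-F$ is not effective and $h^2(F)=h^0(K_X-F)=0$. Riemann--Roch then reads $h^0(F)-h^1(F)=(F^2-K_X\cdot F)/2+1$, so the displayed formula is equivalent to $h^1(F)=0$; this vanishing, together with base-point-freeness of $|F|$, is the substantive content. Here I would use that $-K_X$ is nef and big ($K_X^2=3>0$) and argue by induction on $F\cdot(-K_X)$: for $F\ne0$ nef I pick a suitable $C\in\hbox{NEG}(X)$ and feed the restriction sequence $0\to\mathcal O_X(F-C)\to\mathcal O_X(F)\to\mathcal O_C(F)\to0$ (with $C\cong\pr^1$, so the cohomology of $\mathcal O_C(F)$ is explicit) into the induction, reducing both the vanishing and base-point-freeness for $F$ to the same statements for $F-C$; the base case $F\cdot(-K_X)=0$ forces $F=0$, since a nef $F$ then lies in the negative definite lattice $K_X^\perp$. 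I expect this to be the main obstacle, precisely because in arbitrary characteristic one cannot invoke Kawamata--Viehweg vanishing and must run this curve-peeling argument (the anticanonical-rational-surface method) by hand.

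Finally, (d) and (e) are cone statements resting on (a) and (c). For (d) I argue by induction on $F\cdot(-K_X)$ that every effective class is a nonnegative integral combination of $\hbox{NEG}(X)$: if some $C\in\hbox{NEG}(X)$ has $F\cdot C<0$ then $C$ is a fixed component, $F-C$ is effective, and induction finishes; otherwise $F$ is nef, and using negative definiteness of $K_X^\perp$ I subtract a $(-1)$-curve $C$ with $F-C$ still effective (its effectivity read off from the cohomology in (c)), strictly lowering $F\cdot(-K_X)$ until reaching $0$. Part (e) is then formal: a nef $F$ satisfies $F\cdot C\ge0$ on $\hbox{NEG}(X)\subseteq\hbox{EFF}(X)$, and conversely, if $F\cdot C\ge0$ for all $C\in\hbox{NEG}(X)$, then writing any effective $D=\sum a_iC_i$ with $C_i\in\hbox{NEG}(X)$ and $a_i\ge0$ via (d) gives $F\cdot D=\sum a_i(F\cdot C_i)\ge0$, so $F$ is nef.
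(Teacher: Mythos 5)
Your parts (a) and (b) are correct, and your (b) coincides with the only part the paper proves directly (the paper defers (a), (c), (d), (e) to the detailed proof in \cite{refGH} for distinct points, asserting it carries over with minor changes). The genuine gap is in (c). The lemma is stated for an arbitrary blow-up of $6$ essentially distinct points; the ``moreover'' in (b) makes clear that nefness of $-K_X$ is \emph{not} a standing hypothesis, and it genuinely fails for many such $X$ (e.g.\ if four points are collinear, $C=L-E_1-E_2-E_3-E_4$ is prime with $-K_X\cdot C=-1$). Your argument for $h^1(X,F)=0$ and base-point-freeness explicitly invokes ``$-K_X$ nef and big,'' so it cannot prove (c) as stated; the hypothesis actually driving these vanishings is $K_X^2=9-n>0$, which is the content of \cite{refBHProc} on which \cite{refGH} relies. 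Even if one grants $-K_X$ nef, your inductive step is a declared intention rather than an argument: you must exhibit a $C\in\hbox{NEG}(X)$ with $(F-C)\cdot(-K_X)<F\cdot(-K_X)$, with $F-C$ nef (or otherwise controlled) so the inductive hypothesis applies, and with $h^1(C,\C O_C(F))=0$; the existence of such a $C$ is precisely the hard point, and it is the part you label as ``the main obstacle'' without resolving it.

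Part (d) has a second, more elementary defect: the induction measure $F\cdot(-K_X)$ does not decrease in your fixed-component branch. If the fixed component is $C=E_i-E_{i_1}-\cdots-E_{i_r}\in\C V$ with $r\ge2$, then $-K_X\cdot C=1-r<0$, so $(F-C)\cdot(-K_X)>F\cdot(-K_X)$; even for a $(-2)$-class the measure is unchanged, so the induction need not terminate. (Inducting on $F\cdot A$ for a fixed ample $A$, which strictly drops whenever a prime divisor is subtracted, repairs this branch.) In the nef branch, the claim that one can always subtract a $(-1)$-curve $C\in\hbox{NEG}(X)$ with $F-C$ still effective is again asserted without proof and is not automatic from (c): Riemann--Roch gives only $\chi(F-C)=\chi(F)-F\cdot C-1$, which can be nonpositive for a badly chosen $C$, so a specific choice and a separate argument are required. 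Part (e) is fine once (d) is established, and your (a) is a clean, complete self-contained argument that the paper itself only cites.
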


\begin{proof} This result is well known. A proof of parts (a), (c), (d) and
(e) when the points are assumed to be distinct is given in detail
in \cite{refGH}. The same proof carries over with only minor
changes here. Part (b) follows from (a) just by taking into
account that each class $C$ in $\hbox{NEG}(X)$ must satisfy
$-K_X\cdot C\ge0$.
\end{proof}

\begin{rem}\label{minustwoRem} In the same way that it is easier
to specify a combinatorial geometry of points in the plane by
specifying which sets of three or more points are collinear
(suppressing mention of all of the pairs of points
which define a line going through no other point),
it is often easier to work with the set
$\hbox{neg}(X)=\{C\in \hbox{NEG}(X) : C^2 < -1\}$
than with $\hbox{NEG}(X)$. As shown in Remark 2.2 of \cite{refGH},
$\hbox{neg}(X)$ determines $\hbox{NEG}(X)$.
In fact, we have:
$$\hbox{NEG}(X)=\hbox{neg}(X)\cup
\{C\in\C B\cup\C L\cup\C Q\hbox{ $|$ }C^2=-1,
C\cdot D\ge0\ \hbox{ for all }\ D\in \hbox{neg}(X)\}.$$
If $-K_X$ is nef, note that
$\hbox{neg}(X)\subseteq \C V''\cup\C L''\cup\C Q''$.
\end{rem}

\section{Configuration Types}\label{conftypes}

In this section we determine the configuration types
of 6 essentially distinct points of $\pr^2$, under the restriction that
$-K_X$ is nef. I.e., we find all \emph{pairwise nonnegative\/} subsets of
$\C B'\cup\C V'\cup\C L'\cup\C Q'$ (a pairwise nonnegative
subset being a subset such that
whenever $C$ and $D$ are distinct elements of the subset,
we have $C\cdot D\ge 0$). With Remark \ref{minustwoRem} in mind,
we actually only do this for subsets of
$\C V''\cup\C L''\cup\C Q''$.
Also, we do this only up to permutations
of the classes $E_1,\ldots,E_6$. Thus we find the unordered configuration
types, hence only one representative
of each orbit under the action of the group of permutations
of $E_1,\ldots,E_6$. (Note for example that
$\{E_1-E_3, E_2-E_4\}$ and $\{E_1-E_2, E_3-E_4\}$ are the same
up to permutations of the $E_i$.)

We also show that each
configuration type actually occurs over every algebraically closed
field (regardless of the characteristic).
Both for this latter question of representability and for
distinguishing when different pairwise nonnegative
subsets $T$ give different configuration types, it is helpful
to compute the torsion groups $\hbox{Tors}_T$ for the quotients
$\hbox{Cl}(X)/\langle T\rangle$ of the divisor class group
by the subgroup generated by the elements of $T$ (or equivalently, the torsion
subgroup of $K_X^\perp/\langle T\rangle$). So we include this
information in Table \ref{configtable}, whenever $\hbox{Tors}_T\ne0$.

We can associate a graph (whose connected components are Dynkin
diagrams \cite{refHandbook}) to each configuration type. If $T$ is
a pairwise nonnegative subset of $\C V''\cup\C L''\cup\C Q''$, we
have the graph $G_T$, whose vertices are the elements of $T$ and
we have $C\cdot D$ edges between each distinct pair of vertices
$C,D\in T$. It turns out that there is at most one edge between
any two vertices, and, in terms of the standard notation for
Dynkin diagrams, the connected components of each $G_T$ are always
among the following types: $A_i$, $1\le i\le5$; $D_4$; $D_5$ and
$E_6$. (If the graph $G_T$ for a subset $T$ has more than one
connected component, say an $A_1$ and two of type $A_2$, we write
this as $A_12A_2$.) Different configuration types can have the
same graph, but the torsion subgroup for each configuration type
turns out to be determined by the graph. Since different
configuration types can have the same graph, the Dynkin diagram
(such as $A_12A_2$) is not by itself enough to uniquely identify a
configuration type, so we distinguish different configuration
types with the same graph by appending a lower case letter (for
example, $A_12A_2a$ or $A_12A_2b$) when there is more than one
configuration type with a given graph.

The 90 different configuration types (i.e., the classification,
up to permutations, of the
pairwise nonnegative subsets of $\C V''\cup\C L''\cup\C Q''$)
are shown in Table \ref{configtable}. For each configuration
type $T$ we give the corresponding graph $G_T$ (using Dynkin notation),
we give the set $T$ itself, and, when not 0, we give $\hbox{Tors}_T$ (which is always either
0, ${\bf Z}/2{\bf Z}$ or ${\bf Z}/3{\bf Z}$; we denote the latter two
by ${\bf Z}_2$ and ${\bf Z}_3$ in Table \ref{configtable}).
We give the set $T$ by listing its elements, following the
approach used in \cite{refBCH}. We use letters A through F
to denote the points $p_1$ through $p_6$, and numbers to indicate
the degree of the curve.
For example, the set $T$ for $3A_1d$ is given
as \hbox{0: AB, CD; 2: ABCDEF}. Thus $T$ consists of
the classes $E_1-E_2$, $E_3-E_4$ and $2L-E_1-\cdots-E_6$.

We obtained the table by brute force as follows. Start by finding all
single element configuration types, which is easy.
These are just the single element subsets of
$\C V''\cup\C L''\cup\C Q''$. Pick a representative
for each orbit under the permutation action.
We get three singleton sets $T$,
corresponding to items 2, 3 and 4 in Table \ref{configtable}.
Add to each singleton configuration type $T$ each element of
$\C V''\cup\C L''\cup\C Q''$ which meets every
class already in $T$ nonnegatively, and again
pick a representative set from each orbit.
Continue this way for six cycles. (Six is enough since, as shown in the proof of Proposition \ref{rprsntblty},
the elements of each $T$ are linearly independent, and so
$T$ can have at most 6 elements.)

\begin{prop}\label{rprsntblty}
Over every algebraically closed field $k$,
each configuration type occurs
as $\hbox{neg}(X)$
for some surface $X$ obtained by blowing up
6 essentially distinct points of $\pr^2_k$.
\end{prop}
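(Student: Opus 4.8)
The plan is to show that each of the 90 configuration types $T$ from Table \ref{configtable} is realized as $\hbox{neg}(X)$ for an actual surface $X$, working over an arbitrary algebraically closed field $k$. First I would observe that the elements of any pairwise nonnegative subset $T\subseteq\C V''\cup\C L''\cup\C Q''$ are linearly independent: each such class lies in $K_X^\perp$, which is negative definite, and a pairwise nonnegative set of classes in a negative definite lattice (these are among the roots of the $E_6$ root system, with $C^2=-2$ and $C\cdot D\in\{0,1\}$ for distinct $C,D\in T$) must be linearly independent — indeed the corresponding graph $G_T$ is a union of Dynkin diagrams, whose associated simple roots are independent. This justifies the ``six cycles'' bound asserted just before the statement, since $\dim K_X^\perp=6$.

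The heart of the argument is to produce, for each type $T$, essentially distinct points $p_1,\ldots,p_6$ whose surface $X$ has exactly the prescribed negative curves. I would build the points by reading off the incidence data encoded in $T$ directly: a class $E_i-E_j$ in $T$ means $p_j$ is infinitely near $p_i$ (i.e. $p_j$ lies on the exceptional curve $E_i$), a class $L-E_i-E_j-E_k$ means $p_i,p_j,p_k$ are collinear, and $2L-E_1-\cdots-E_6$ means all six points lie on a conic. The task is to place the points so that \emph{all} of these incidences hold simultaneously and \emph{no unwanted} incidences are forced. I would choose the points one at a time in the order $p_1,\ldots,p_6$, at each stage selecting $p_{i}$ on the appropriate blown-up surface $X_{i-1}$ to satisfy the required conditions (lying on a specified exceptional curve, or on a specified proper transform of a line or conic) while avoiding the finitely many ``bad'' loci that would create extra collinearities or incidences not listed in $T$. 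Because $k$ is infinite (being algebraically closed) and each condition to be satisfied is the vanishing of finitely many regular functions while each condition to be avoided is an open dense complement, such a choice exists at every stage; this is where characteristic independence comes in, since the existence of points in general position, and of points satisfying a bounded set of linear/conic incidences, holds over every algebraically closed field.

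Having placed the points, I would verify that $\hbox{neg}(X)$ for the resulting $X$ equals $T$ and not something larger. By Lemma \ref{NEGisfinite}(a),(b) together with Remark \ref{minustwoRem}, $\hbox{neg}(X)$ is contained in $\C V''\cup\C L''\cup\C Q''$, so it suffices to check that no class of $\C V''\cup\C L''\cup\C Q''$ outside $T$ is the class of a prime divisor on $X$ — equivalently, that the corresponding infinitely-near, collinearity, or conic incidence fails, which is exactly what the general-position choices at each stage guarantee. One must also confirm that every class listed in $T$ genuinely is the class of a reduced irreducible curve of the stated self-intersection; this follows because the curve (exceptional fiber, proper transform of a line, or of the conic) is irreducible by construction. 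I would finally note that $-K_X$ is automatically nef for these configurations, since $T\subseteq K_X^\perp$ forces every negative curve to meet $-K_X$ nonnegatively and Lemma \ref{NEGisfinite}(e) then gives nefness.

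The main obstacle I anticipate is the simultaneous-realizability step: showing that for \emph{every} one of the 90 combinatorially admissible patterns $T$ the incidences can in fact be realized together with no forced extra incidences. A priori some pairwise nonnegative $T$ might be combinatorially consistent yet geometrically obstructed (a collinearity or conic condition could force an unwanted one, or the torsion invariant $\hbox{Tors}_T$ could obstruct representability over certain fields). I expect the torsion data recorded in Table \ref{configtable} to be the key bookkeeping tool here: the values ${\bf Z}_2$ and ${\bf Z}_3$ predict exactly which configurations require a special coincidence (such as a group-theoretic relation among points on the anticanonical curve), and the reason representability holds uniformly in all characteristics is that the finitely many required coincidences can always be arranged in $\pr^2_k$ regardless of $\hbox{char}(k)$. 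The cleanest way to organize the proof is therefore to go down the list in Table \ref{configtable} and exhibit an explicit point-placement for each type, treating the handful of types with nonzero torsion as the genuinely delicate cases and the rest as straightforward inductive placements.
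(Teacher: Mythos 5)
Your plan is genuinely different from the paper's, but it has a real gap at exactly the step that carries all the content of the proposition. The incremental ``place each $p_i$ generically subject to the required incidences'' argument needs, at every stage, the claim that the locus of choices creating an \emph{unwanted} incidence is a proper closed subset of the locus of choices satisfying the \emph{required} ones. That is precisely the representability question, and it is not automatic: the required incidences cut the parameter space down to a small (sometimes zero-dimensional up to projectivities) family, and one must check in each of the 90 cases that no further class of $\C V''\cup\C L''\cup\C Q''$ is forced to become effective on all of that family. You acknowledge this yourself (``a priori some pairwise nonnegative $T$ might be geometrically obstructed'') and you explicitly defer the types with $\hbox{Tors}_T\ne0$ as ``the genuinely delicate cases'' without resolving them; but those are exactly the cases where a na\"ive general-position count fails, because realizing $T$ there requires imposing a special coincidence (a torsion relation) rather than avoiding one. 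As written, the proposal is a program for a 90-case verification whose hardest instances are left open, so it does not yet constitute a proof. (Your linear-independence observation is fine in substance, though the paper proves it by a direct splitting argument $D_1-D_2=0$ combined with an auxiliary class meeting every element of $\C V''\cup\C L''\cup\C Q''$ positively, rather than by appealing to the classification of Dynkin diagrams.)

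The paper avoids the case analysis entirely with one uniform construction: place all six points on a non-supersingular smooth plane cubic $C$. Choosing a squarefree $l$ and a surjection $\phi:K_X^\perp/\langle T\rangle\to{\bf Z}/l{\bf Z}$ killing no element of $\C V''\cup\C L''\cup\C Q''$ outside $\langle T\rangle$, and embedding ${\bf Z}/l{\bf Z}$ into $\hbox{Pic}^0(C)$ (possible in every characteristic by non-supersingularity), one picks the $p_i$ via the group law so that the restriction homomorphism $\Phi:\hbox{Cl}(X)\to\hbox{Pic}(C)$ has kernel meeting $\C V''\cup\C L''\cup\C Q''$ exactly in $\langle T\rangle$. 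The key external input is \cite{refTAMS}: for such points, a class $D\in\C V''\cup\C L''\cup\C Q''$ is effective if and only if $\Phi(D)=0$. This converts the entire ``no unwanted incidences'' problem into elementary group theory, after which a short argument using the negative definiteness and evenness of $K_X^\perp$ shows $\hbox{neg}(X)=T$. If you want to salvage your approach, you would need either to import this anticanonical-curve mechanism (at which point you have the paper's proof) or to supply, type by type, an explicit verification that the incidence pattern is realizable with nothing extra — including the torsion types, where the points cannot be chosen generically on their incidence variety.
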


\begin{proof} Let $T$ be the set of classes
of a configuration type, and consider the group
$K^\perp/\langle T\rangle$. Since $\C V''\cup\C L''\cup\C Q''$
is a finite set and since from Table \ref{configtable} we see that the torsion subgroup
of $K^\perp/\langle T\rangle$ is either trivial or has prime order,
we can pick a squarefree positive integer $l$ and a surjective homomorphism
$\phi: K^\perp/\langle T\rangle\to {\bf Z}/l{\bf Z}$ such that
no element $C\in\C V''\cup\C L''\cup\C Q''$ not already in
$\langle T\rangle$ maps to 0 in ${\bf Z}/l{\bf Z}$.

Now let $C$ be a non-supersingular
smooth plane cubic curve.
Since $C$ is not supersingular, $\hbox{Pic}^0(C)$
has a subgroup isomorphic to ${\bf Z}/l{\bf Z}$; we identify
${\bf Z}/l{\bf Z}$ with this subgroup of $\hbox{Pic}^0(C)$. Thus there is
a homomorphism $\Phi: \hbox{Cl}(X)\to \hbox{Pic}(C)$
such that the image $\Phi(K_X^\perp)$ is exactly ${\bf Z}/l{\bf Z}$.
Pick any point on $C$ to be $p_1$. Then pick $p_i$ to be the image
of $E_i-E_1$ in $\hbox{Pic}^0(C)$.

Under the usual identification
of $\hbox{Pic}^0(C)$ with $C$ itself, this gives us
six points $p_1,\ldots, p_6$. (It may be that
some of the points are formally the same. For example,
if $E_1-E_2\in T$, then $p_1=p_2$. This just means
that $p_2$ is the point on the proper transform
$C'$ of $C$ on $X_1$ infinitely near
$p_1\in X_0$. Since restricting the mapping $\pi_1:X_1\to\pr^2$ to $C'$
gives an isomorphism of $C'$ to $C$, there is a natural identification of
$C'$ with $C$. Under this identification we can indeed
regard $p_1$ and $p_2$ as being the same point of $C$,
even though properly speaking $p_1\in\pr^2$ and $p_2\in X_1$.)

By construction, the surface $X$ obtained by
blowing up the points $p_1,\ldots, p_6$
has the property that an element $D\in\C V''\cup\C L''\cup\C Q''$
is in the kernel of $\Phi$ if and only if $D\in \langle T\rangle$.
By \cite{refTAMS}, an element $D$ of $\C V''\cup\C L''\cup\C Q''$
is the class of an effective divisor if and only if
$D\in\hbox{ker}(\Phi)$. Thus $D\in\C V''\cup\C L''\cup\C Q''$
is effective if and only if $D\in \langle T\rangle$.

In particular, the elements
of $T$ are effective and $\hbox{neg}(X)\subseteq \langle T\rangle$.
Let $D\in \hbox{neg}(X)$. By Lemma \ref{NEGisfinite}(b), $D^2=-2$.
Now write $D$ as an integer linear
combination of elements of $T$. Thus we can write
$D=D_1-D_2$, where $D_1$ is a sum of elements of $T$
with positive coefficients and $D_2$ is either 0 or
a sum of different elements of $T$
with positive coefficients. Note that $D_1$ is not zero, since
otherwise $D$ is either 0 or antieffective, neither of which can hold
since $D$ is the class of a prime divisor.
We claim however that $D_2=0$.
If not, then, since $K_X^\perp$ is negative definite and even,
we have $D_1^2\le -2$ and $D_2^2\le -2$. Since $D_1$ and $D_2$ involve
different elements of $T$ (which therefore meet nonnegatively),
we also see $D_1\cdot D_2\ge 0$. Thus $D^2=D_1^2-2D_1\cdot D_2+D_2^2\le-4$,
contradicting $D^2=-2$. (A similar argument shows that
the elements of $T$ are linearly independent.
If not, we can find an expression $D_1-D_2=0$ for some 
nonnegative linear integer combinations $D_i$  of elements of 
disjoint subsets $T_i\subseteq T$. 
By pairwise nonnegativity, we have $D_1\cdot D_2\ge 0$,
but $-K_X^\perp$ is negative definite, so $0\ge D_i^2=D_1\cdot D_2$,
hence $D_i^2=0$, so $D_i=0$. But $T\subseteq\C V''\cup\C L''\cup\C Q''$,
and every element of $\C V''\cup\C L''\cup\C Q''$ meets
$A=14L-6E_1-5E_2-4E_3-3E_4-2E_5-E_6$ positively, so 
if $D_i$ is not a linear integer combination of elements of $T_i$ 
with 0 coefficients, then we have $0<A\cdot D_i=A\cdot 0=0$,
which is impossible. Thus each $D_i$ is the trivial linear combination,
hence $T$ is linearly independent.)

Thus every element of $\hbox{neg}(X)$ is a nonnegative
sum of elements of $T$, each of which
is effective. But the elements of $\hbox{neg}(X)$ are prime divisors of negative
self-intersection, hence each can be written as a nonnegative sum of classes of effective
divisors only one way; i.e., every element of $\hbox{neg}(X)$ is an element of $T$.

By Lemma \ref{NEGisfinite}(d), every element of $T$ is a nonnegative integer linear
combination of elements of $\hbox{NEG}(X)$. But $T\subseteq K_X^\perp$,
so in fact every element of $T$ is a nonnegative integer linear
combination of elements of $\hbox{neg}(X)$. Since
$\hbox{neg}(X)\subseteq T$, and since $T$ is linearly independent,
this is possible only if $\hbox{neg}(X)=T$.
\end{proof}

%\vfil\eject

\ \vskip.1in

{
\newcount\papercnt
\papercnt=1
\def\pprno{\scriptsize\number\papercnt.$\,\,$\global\advance\papercnt by1}

\begin{table}
\hbox to\hsize{\vbox{
\hbox to 2.7in{\hrulefill}
\vskip-.05in
\hbox to 2.7in{\scriptsize\hbox to 0.9in{\ \ \ \ $G_T$ \hfil} $T$ \hfil $\hbox{Tors}_T$}
\vskip-.1in
\hbox to 2.7in{\hrulefill}

\hbox to 2.7in{\scriptsize\hbox to 0.9in{\ $\,$\pprno \ \ $\emptyset$ \hfil}\hfil}

\hbox to 2.7in{\scriptsize\hbox to 0.9in{\ $\,$\pprno $A_1a$ \hfil} 0: AB \hfil}

\hbox to 2.7in{\scriptsize\hbox to 0.9in{\ $\,$\pprno $A_1b$ \hfil} 1: ABC \hfil}

\hbox to 2.7in{\scriptsize\hbox to 0.9in{\ $\,$\pprno $A_1c$ \hfil} 2: ABCDEF \hfil}

\hbox to 2.7in{\scriptsize\hbox to 0.9in{\ $\,$\pprno $2A_1a$ \hfil} 0: AB, CD \hfil}

\hbox to 2.7in{\scriptsize\hbox to 0.9in{\ $\,$\pprno $2A_1b$ \hfil} 0: AB; 1: ABC \hfil}

\hbox to 2.7in{\scriptsize\hbox to 0.9in{\ $\,$\pprno $2A_1c$ \hfil} 0: DE; 1: ABC \hfil}

\hbox to 2.7in{\scriptsize\hbox to 0.9in{\ $\,$\pprno $2A_1d$ \hfil} 0: AB; 2: ABCDEF \hfil}

\hbox to 2.7in{\scriptsize\hbox to 0.9in{\ $\,$\pprno $2A_1e$ \hfil} 1: ABC, ADE \hfil}

\hbox to 2.7in{\scriptsize\hbox to 0.9in{\pprno $A_2a$ \hfil} 0: AB, BC \hfil}

\hbox to 2.7in{\scriptsize\hbox to 0.9in{\pprno $A_2b$ \hfil} 0: CD; 1: ABC \hfil}

\hbox to 2.7in{\scriptsize\hbox to 0.9in{\pprno $A_2c$ \hfil} 1: ABC, DEF \hfil}

\hbox to 2.7in{\scriptsize\hbox to 0.9in{\pprno $3A_1a$ \hfil} 0: AB, CD, EF \hfil}

\hbox to 2.7in{\scriptsize\hbox to 0.9in{\pprno $3A_1b$ \hfil} 0: AB, DE; 1: ABC \hfil}

\hbox to 2.7in{\scriptsize\hbox to 0.9in{\pprno $3A_1c$ \hfil} 0: BC; 1: ABC, ADE \hfil}

\hbox to 2.7in{\scriptsize\hbox to 0.9in{\pprno $3A_1d$ \hfil} 0: AB, CD; 2: ABCDEF \hfil}

\hbox to 2.7in{\scriptsize\hbox to 0.9in{\pprno $3A_1e$ \hfil} 1: ABC, ADE, BDF \hfil}

\hbox to 2.7in{\scriptsize\hbox to 0.9in{\pprno $A_1A_2a$ \hfil} 0: AB, BC, DE \hfil}

\hbox to 2.7in{\scriptsize\hbox to 0.9in{\pprno $A_1A_2b$ \hfil} 0: AB, BC; 1: ABC \hfil}

\hbox to 2.7in{\scriptsize\hbox to 0.9in{\pprno $A_1A_2c$ \hfil} 0: AB, BC; 1: DEF \hfil}

\hbox to 2.7in{\scriptsize\hbox to 0.9in{\pprno $A_1A_2d$ \hfil} 0: AB, CD; 1: ABC \hfil}

\hbox to 2.7in{\scriptsize\hbox to 0.9in{\pprno $A_1A_2e$ \hfil} 0: CD, EF; 1: ABC \hfil}

\hbox to 2.7in{\scriptsize\hbox to 0.9in{\pprno $A_1A_2f$ \hfil} 0: CD; 1: ABC, AEF \hfil}

\hbox to 2.7in{\scriptsize\hbox to 0.9in{\pprno $A_1A_2g$ \hfil} 0: AB; 1: ABC, ADE \hfil}

\hbox to 2.7in{\scriptsize\hbox to 0.9in{\pprno $A_1A_2h$ \hfil} 0: AB; 1: ABC, DEF \hfil}

\hbox to 2.7in{\scriptsize\hbox to 0.9in{\pprno $A_1A_2i$ \hfil} 0: AB, BC; 2: ABCDEF \hfil}

\hbox to 2.7in{\scriptsize\hbox to 0.9in{\pprno $A_3a$ \hfil} 0: AB, BC, CD \hfil}

\hbox to 2.7in{\scriptsize\hbox to 0.9in{\pprno $A_3b$ \hfil} 0: CD, DE; 1: ABC \hfil}

\hbox to 2.7in{\scriptsize\hbox to 0.9in{\pprno $A_3c$ \hfil} 0: AB, DE; 1: ACD \hfil}

\hbox to 2.7in{\scriptsize\hbox to 0.9in{\pprno $A_3d$ \hfil} 0: AF; 1: ABC, ADE \hfil}

\hbox to 2.7in{\scriptsize\hbox to 0.9in{\pprno $A_3e$ \hfil} 0: BC, CD; 1: ABC \hfil}

\hbox to 2.7in{\scriptsize\hbox to 0.9in{\pprno $4A_1a$ \hfil} 0: BC, DE; 1: ABC, ADE \hfil ${\bf Z}_2$}

\hbox to 2.7in{\scriptsize\hbox to 0.9in{\pprno $4A_1b$ \hfil} 0: AB, CD, EF; 2: ABCDEF \hfil ${\bf Z}_2$}

\hbox to 2.7in{\scriptsize\hbox to 0.9in{\pprno $4A_1c$ \hfil} 1: ABC, ADE, BDF, CEF \hfil ${\bf Z}_2$}

\hbox to 2.7in{\scriptsize\hbox to 0.9in{\pprno $2A_1A_2a$ \hfil} 0: AB, BC, DE; 1: ABC \hfil}

\hbox to 2.7in{\scriptsize\hbox to 0.9in{\pprno $2A_1A_2b$ \hfil} 0: AB, CD, EF; 1: ABC \hfil}

\hbox to 2.7in{\scriptsize\hbox to 0.9in{\pprno $2A_1A_2c$ \hfil} 0: AB, DE; 1: ABC, DEF \hfil}

\hbox to 2.7in{\scriptsize\hbox to 0.9in{\pprno $2A_1A_2d$ \hfil} 0: AB, DE, EF; 1: ABC \hfil}

\hbox to 2.7in{\scriptsize\hbox to 0.9in{\pprno $2A_1A_2e$ \hfil} 0: AB, DE; 1: ABC, ADE \hfil}

\hbox to 2.7in{\scriptsize\hbox to 0.9in{\pprno $2A_1A_2f$ \hfil} 0: BF, DE; 1: ABC, ADE \hfil}

\hbox to 2.7in{\scriptsize\hbox to 0.9in{\pprno $2A_1A_2g$ \hfil} 0: AC; 1: ABC, ADE, BDF \hfil}

\hbox to 2.7in{\scriptsize\hbox to 0.9in{\pprno $2A_1A_2h$ \hfil} 0: AB, BC, DE; 2: ABCDEF \hfil}

\hbox to 2.7in{\scriptsize\hbox to 0.9in{\pprno $A_1A_3a$ \hfil} 0: AB, BC, CD, EF \hfil}

\hbox to 2.7in{\scriptsize\hbox to 0.9in{\pprno $A_1A_3b$ \hfil} 0: AB, CD, DE; 1: ABC \hfil}

\hbox to 2.7in{\scriptsize\hbox to 0.9in{\pprno $A_1A_3c$ \hfil} 0: AB, DF; 1: ABC, ADE \hfil}
\vfil}
\hfill
\vbox{
\hbox to 3.2in{\hrulefill}
\vskip-.05in
\hbox to 3.2in{\scriptsize\hbox to 0.9in{\ \ \ \ $G_T$ \hfil} $T$ \hfil $\hbox{Tors}_T$}
\vskip-.1in
\hbox to 3.2in{\hrulefill}

\hbox to 3.2in{\scriptsize\hbox to 0.9in{\pprno $A_1A_3d$ \hfil} 0: AB, BC; 1: ABC, ADE \hfil}

\hbox to 3.2in{\scriptsize\hbox to 0.9in{\pprno $A_1A_3e$ \hfil} 0: AF, BC; 1: ABC, ADE \hfil}

\hbox to 3.2in{\scriptsize\hbox to 0.9in{\pprno $A_1A_3f$ \hfil} 0: BC, CD; 1: ABC, DEF \hfil}

\hbox to 3.2in{\scriptsize\hbox to 0.9in{\pprno $A_1A_3g$ \hfil} 0: BC, CD, EF; 1: ABC \hfil}

\hbox to 3.2in{\scriptsize\hbox to 0.9in{\pprno $A_1A_3h$ \hfil} 0: AB, BC, CD; 2: ABCDEF \hfil}

\hbox to 3.2in{\scriptsize\hbox to 0.9in{\pprno $2A_2a$ \hfil} 0: AB, BC, DE, EF \hfil}

\hbox to 3.2in{\scriptsize\hbox to 0.9in{\pprno $2A_2b$ \hfil} 0: AB, CF; 1: ABC, ADE \hfil}

\hbox to 3.2in{\scriptsize\hbox to 0.9in{\pprno $2A_2c$ \hfil} 0: AB, BC; 1: ABC, DEF \hfil}

\hbox to 3.2in{\scriptsize\hbox to 0.9in{\pprno $A_4a$ \hfil} 0: AB, BC, CD, DE \hfil}

\hbox to 3.2in{\scriptsize\hbox to 0.9in{\pprno $A_4b$ \hfil} 0: AB, CD, DE; 1: ACD \hfil}

\hbox to 3.2in{\scriptsize\hbox to 0.9in{\pprno $A_4c$ \hfil} 0: AB, BC, EF; 1: ADE \hfil}

\hbox to 3.2in{\scriptsize\hbox to 0.9in{\pprno $A_4d$ \hfil} 0: CD, DE, EF; 1: ABC \hfil}

\hbox to 3.2in{\scriptsize\hbox to 0.9in{\pprno $A_4e$ \hfil} 0: BC, CD; 1: ABC, BEF \hfil}

\hbox to 3.2in{\scriptsize\hbox to 0.9in{\pprno $A_4f$ \hfil} 0: AB, BC, CD; 1: ABC \hfil}

\hbox to 3.2in{\scriptsize\hbox to 0.9in{\pprno $D_4a$ \hfil} 0: BC, CD, DE; 1: ABC \hfil}

\hbox to 3.2in{\scriptsize\hbox to 0.9in{\pprno $D_4b$ \hfil} 0: AB, CD, EF; 1: ACE \hfil}

\hbox to 3.2in{\scriptsize\hbox to 0.9in{\pprno $A_12A_2a$ \hfil} 0: AB, BC, DE, EF; 1: ABC \hfil}

\hbox to 3.2in{\scriptsize\hbox to 0.9in{\pprno $A_12A_2b$ \hfil} 0: AB, CF, DE; 1: ABC, ADE \hfil}

\hbox to 3.2in{\scriptsize\hbox to 0.9in{\pprno $A_12A_2c$ \hfil} 0: AB, BC, DE; 1: ABC, DEF \hfil}

\hbox to 3.2in{\scriptsize\hbox to 0.9in{\pprno $A_12A_2d$ \hfil} 0: AB, CD; 1: ABC, AEF, CDE \hfil}

\hbox to 3.2in{\scriptsize\hbox to 0.9in{\pprno $A_12A_2e$ \hfil} 0: AB, BC, DE, EF; 2: ABCDEF \hfil}

\hbox to 3.2in{\scriptsize\hbox to 0.9in{\pprno $2A_1A_3a$ \hfil} 0: BC, CD, EF; 1: ABC, AEF \hfil ${\bf Z}_2$}

\hbox to 3.2in{\scriptsize\hbox to 0.9in{\pprno $2A_1A_3b$ \hfil} 0: AD, CE; 1: ABC, ADF, CEF \hfil ${\bf Z}_2$}

\hbox to 3.2in{\scriptsize\hbox to 0.9in{\pprno $2A_1A_3c$ \hfil} 0: AB, BC, DE; 1: ABC, ADE \hfil ${\bf Z}_2$}

\hbox to 3.2in{\scriptsize\hbox to 0.9in{\pprno $2A_1A_3d$ \hfil} 0: AF, BC, DE; 1: ABC, ADE \hfil ${\bf Z}_2$}

\hbox to 3.2in{\scriptsize\hbox to 0.9in{\pprno $2A_1A_3e$ \hfil} 0: BC, CF, DE; 1: ABC, ADE \hfil ${\bf Z}_2$}

\hbox to 3.2in{\scriptsize\hbox to 0.9in{\pprno $2A_1A_3f$ \hfil} 0: AB, BC, CD, EF; 2: ABCDEF \hfil ${\bf Z}_2$}

\hbox to 3.2in{\scriptsize\hbox to 0.9in{\pprno $A_1A_4a$ \hfil} 0: AB, BC, CD, EF; 1: ABC \hfil}

\hbox to 3.2in{\scriptsize\hbox to 0.9in{\pprno $A_1A_4b$ \hfil} 0: AB, CD, DE, EF; 1: ABC \hfil}

\hbox to 3.2in{\scriptsize\hbox to 0.9in{\pprno $A_1A_4c$ \hfil} 0: AB, DE, EF; 1: ABC, ADE \hfil}

\hbox to 3.2in{\scriptsize\hbox to 0.9in{\pprno $A_1A_4d$ \hfil} 0: AB, BF, DE; 1: ABC, ADE \hfil}

\hbox to 3.2in{\scriptsize\hbox to 0.9in{\pprno $A_1A_4e$ \hfil} 0: AB, BC, EF; 1: ABC, ADE \hfil}

\hbox to 3.2in{\scriptsize\hbox to 0.9in{\pprno $A_1A_4f$ \hfil} 0: AB, BC, CD, DE; 2: ABCDEF \hfil}

\hbox to 3.2in{\scriptsize\hbox to 0.9in{\pprno $A_5a$ \hfil} 0: AB, BC, CD, DE, EF \hfil}

\hbox to 3.2in{\scriptsize\hbox to 0.9in{\pprno $A_5b$ \hfil} 0: AB, BC, DE, EF; 1: ADE \hfil}

\hbox to 3.2in{\scriptsize\hbox to 0.9in{\pprno $A_5c$ \hfil} 0: AB, BC, CD; 1: ABC, AEF \hfil}

\hbox to 3.2in{\scriptsize\hbox to 0.9in{\pprno $D_5a$ \hfil} 0: BC, CD, DE, EF; 1: ABC \hfil}

\hbox to 3.2in{\scriptsize\hbox to 0.9in{\pprno $D_5b$ \hfil} 0: AB, CD, DE, EF; 1: ACD \hfil}

\hbox to 3.2in{\scriptsize\hbox to 0.9in{\pprno $D_5c$ \hfil} 0: AB, BC, CD, DE; 1: ABC \hfil}

\hbox to 3.2in{\scriptsize\hbox to 0.9in{\pprno $3A_2a$ \hfil} 0: AB, BC, DE, EF; 1: ABC, DEF \hfil ${\bf Z}_3$}

\hbox to 3.2in{\scriptsize\hbox to 0.9in{\pprno $3A_2b$ \hfil} 0: AB, CD, EF; 1: ABC, AEF, CDE \hfil ${\bf Z}_3$}

\hbox to 3.2in{\scriptsize\hbox to 0.9in{\pprno $A_1A_5a$ \hfil} 0: AB, BC, DE, EF; 1: ABC, ADE \hfil ${\bf Z}_2$}

\hbox to 3.2in{\scriptsize\hbox to 0.9in{\pprno $A_1A_5b$ \hfil} 0: AB, BC, CF, DE; 1: ABC, ADE \hfil ${\bf Z}_2$}

\hbox to 3.2in{\scriptsize\hbox to 0.9in{\pprno $A_1A_5c$ \hfil} 0: AB, BC, CD, DE, EF; 2: ABCDEF \hfil ${\bf Z}_2$}

\hbox to 3.2in{\scriptsize\hbox to 0.9in{\pprno $E_6$ \hfil} 0: AB, BC, CD, DE, EF; 1: ABC \hfil}
\vfil}}
\vskip.1in
\caption[\hskip.1in Configuration Types]{Configuration Types}\label{configtable}
\end{table}
}

\vfil\eject

As a check on our list of
configuration types as given in Table \ref{configtable}, we have the following
well known result, Theorem \ref{dynkthm}. 
(See \cite{refBW} for a version of the result in characteristic 0,
or see Theorem IV.1 of the arXiv version math.AG/0506611 of the
paper \cite{refGH} for a proof in general.
The proof is to study the morphisms $X\to\pr^2$ obtained by mapping $X$ to $\pr^3$
using the linear system $|-K_X|$, and then mapping the image $\bar X $
to $\pr^2$ by projecting from a singular point.)
Thus we get the same
Dynkin diagrams from Theorem \ref{dynkthm} as we found by a brute force determination
of configuration types. Moreover, one can (as we did in fact do)
find all exceptional configurations for each
of the 20 graphs listed in Theorem \ref{dynkthm}, and for each
exceptional configuration one can write down
the corresponding (representable) configuration type. Since by Proposition \ref{rprsntblty} every type is
representable over every algebraically closed field, it follows that the types obtained this way
should be (and in fact are) the same types we found by brute force.

\begin{thm}\label{dynkthm}
Let $X$ be a blow up of $\pr^2$ at 6 essentially
distinct points of $\pr^2$, such that $-K_X$ is nef. Assume that
$X$ has at least one $(-2)$-curve. Then the intersection graph of the
set of $(-2)$-curves is one of the following
20 graphs:
$A_1$,
$2A_1$,
$A_2$,
$3A_1$,
$A_1A_2$,
$A_3$,
$4A_1$,
$2A_1A_2$,
$A_1A_3$,
$2A_2$,
$A_4$,
$D_4$,
$A_12A_2$,
$2A_1A_3$,
$A_1A_4$,
$A_5$,
$D_5$,
$3A_2$,
$A_1A_5$, and
$E_6$. Each of these graphs occurs
as the graph of the set of $(-2)$-curves on some $X$,
and in a unique way (unique in the sense that
if the same graph occurs on two surfaces $X$ and
$X'$, then there are exceptional configurations
$L,E_1,\ldots,E_6$ on $X$ and $L',E_1',\ldots,E_6'$
on $X'$, such that a class $a_0L+\sum_ia_iE_i$
is the class of a $(-2)$-curve on $X$ if and only if
$a_0L'+\sum_ia_iE_i'$ is the class of a $(-2)$-curve on $X'$).
\end{thm}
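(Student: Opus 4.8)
The plan is to translate the statement into the arithmetic of the lattice $K_X^\perp$ and the root system it carries. First I would observe that the set of $(-2)$-curves is exactly $\hbox{neg}(X)$: by adjunction a smooth rational curve $C$ satisfies $C^2+K_X\cdot C=-2$, so $C^2=-2$ forces $-K_X\cdot C=0$, and conversely, by Lemma \ref{NEGisfinite} together with Remark \ref{minustwoRem}, every class of $\hbox{neg}(X)$ is a smooth rational curve lying in $\mathcal V''\cup\mathcal L''\cup\mathcal Q''$, each such class having self-intersection $-2$ and being orthogonal to $K_X$. Thus the set of $(-2)$-curves is precisely $\hbox{neg}(X)=T$, and its intersection graph is the graph $G_T$ of Table \ref{configtable}. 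Next I would identify $K_X^\perp$ as a lattice: it has rank $6$, is negative definite, and is even (a short congruence check shows $v^2$ is even for every $v\in K_X^\perp$); moreover the classes $L-E_1-E_2-E_3,\ E_1-E_2,\ \ldots,\ E_5-E_6$ are roots whose Gram matrix is the negative of the Cartan matrix of $E_6$. Hence $K_X^\perp$ is isometric to the (negated) $E_6$ root lattice, and its set $\Phi$ of $(-2)$-classes is a root system of type $E_6$.

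The second step is to recognize $T=\hbox{neg}(X)$ as a base of a subsystem of $\Phi$. The elements of $T$ are linearly independent (this is exactly the argument given in the proof of Proposition \ref{rprsntblty}, using that $K_X^\perp$ is negative definite and that every element of $\mathcal V''\cup\mathcal L''\cup\mathcal Q''$ meets the fixed class $A=14L-6E_1-\cdots-E_6$ strictly positively). Since distinct roots of a simply-laced root system have intersection in $\{-1,0,1\}$, pairwise nonnegativity forces $C\cdot D\in\{0,1\}$ for distinct $C,D\in T$, so the Cartan integers $-C\cdot D$ lie in $\{0,-1\}$. The Gram matrix of $T$ is therefore the negative of a symmetric, simply-laced Cartan matrix, and being negative definite its connected components are the finite-type simply-laced Dynkin diagrams, i.e.\ of type $A_i$, $D_i$, or $E_i$. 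Thus $G_T$ is a disjoint union of such diagrams, and $T$ is a base of the sub-root-system $\Phi_T\subseteq\Phi$ it generates.

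It then remains to determine which diagrams occur and to settle existence and uniqueness. For the list I would run the Borel--de Siebenthal procedure on $E_6$: repeatedly delete nodes from the extended Dynkin diagram $\tilde E_6$ and recurse, recording the resulting subsystems up to $W(E_6)$. This finite computation produces exactly the $20$ diagrams listed, with rank and embeddability ruling out candidates such as $5A_1$, $A_6$, and $D_6$. Existence is then immediate: by Proposition \ref{rprsntblty} every configuration type of Table \ref{configtable} is representable over every algebraically closed field, and inspection of the $G_T$ column shows that each of the $20$ diagrams is realized as the graph of some $T$.

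The main obstacle is the uniqueness clause. It asserts that the graph determines $T$ up to a change of exceptional configuration; since such a change acts on $K_X^\perp$ as an element of $W(E_6)$, this splits into two subfacts. The lattice-theoretic one is that in $E_6$ two sub-root-systems of the same Dynkin type are conjugate under $W(E_6)$---a feature of $E_6$ (in contrast to $E_7$ and $E_8$, where distinct non-conjugate subsystems can share a type); this is precisely what collapses the $90$ unordered types of Table \ref{configtable}, classified only up to the $S_6$ permuting $E_1,\ldots,E_6$, onto the $20$ graphs. The delicate, geometric subfact is that a $W(E_6)$-conjugacy carrying $T\subseteq K_X^\perp$ to $T'\subseteq K_{X'}^\perp$ can be realized by honest exceptional configurations on the two surfaces, i.e.\ that the transported basis is again geometric; here one must use that the conjugating element sends $(-2)$-curves to $(-2)$-curves and preserves effectivity, so that the transported marking is the exceptional configuration of a genuine birational morphism to $\pr^2$. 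Alternatively, as the authors indicate, one can produce the matching configurations directly by mapping $X$ to its cubic model in $\pr^3$ via $|-K_X|$ and projecting to $\pr^2$ from a singular point, which exhibits compatible exceptional configurations geometrically.
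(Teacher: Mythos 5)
Your route is genuinely different from the paper's. The paper does not really prove Theorem \ref{dynkthm} in this text: it cites \cite{refBW} for characteristic $0$ and Theorem IV.1 of the arXiv version of \cite{refGH} for the general case, and the indicated argument is geometric --- map $X$ to its normal cubic model in $\pr^3$ by $|-K_X|$ and project from a singular point to produce the various birational morphisms to $\pr^2$ and compare their exceptional configurations. You instead reduce everything to the lattice $K_X^\perp\cong E_6$: identify the $(-2)$-curves with $\hbox{neg}(X)$ (correct, using adjunction and Lemma \ref{NEGisfinite}(b)), observe that $\hbox{neg}(X)$ is a linearly independent set of roots with pairwise products in $\{0,1\}$, hence a base of an ADE subsystem, enumerate subsystems of $E_6$ by Borel--de Siebenthal/Dynkin, and get existence from Proposition \ref{rprsntblty} plus inspection of Table \ref{configtable}. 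This buys a uniform, characteristic-free combinatorial computation and a conceptual explanation of why the $90$ types (classified only up to $S_6$) collapse onto $20$ graphs (classified up to $W(E_6)$). Note, though, that the paper offers Theorem \ref{dynkthm} as an \emph{independent check} on Table \ref{configtable}; deriving existence from the table is logically fine but forfeits that check.

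There is, however, a genuine gap in the uniqueness clause, which you identify but do not close. Two assertions are left unproved. First, that in $E_6$ a sub-root-system is determined up to $W(E_6)$-conjugacy by its Dynkin type: this is true but is precisely the nontrivial output of the enumeration (it fails in $E_7$ and $E_8$), so it must be verified, not invoked as ``a feature of $E_6$.'' Second, and more seriously, that an abstract $W(E_6)$-conjugacy carrying $\hbox{neg}(X)$ onto $\hbox{neg}(X')$ is induced by an actual pair of exceptional configurations. That is the entire geometric content of the uniqueness statement: one must show that the transported basis $w(L), w(E_1),\ldots,w(E_6)$ consists of effective classes arising from a genuine birational morphism $X'\to\pr^2$ (for instance via \cite{refTAMS}, using that on a surface with $-K_X$ nef every $(-1)$-class meeting all effective $(-2)$-classes nonnegatively is the class of an exceptional curve, so six pairwise disjoint such classes can be contracted). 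Your sketch states what ``one must use'' and then falls back on the authors' cubic-model projection as an ``alternative,'' which is just the paper's own proof. As written, the existence half is complete modulo the finite root-system computation, but the uniqueness half is a plan rather than a proof.
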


\section{Resolutions}\label{resols}

Let $p_1,\ldots,p_6$ be essentially distinct points of $\pr^2$.
Let $Z=m_1p_1+\cdots+m_6p_6$ be a fat point subscheme of
$\pr^2$, and let $F(Z,i)=iL-m_1E_1-\cdots-m_6E_6$ on the surface
$X$ obtained by blowing up the points $p_i$.
As explained in Section \ref{bkgd}, the ideal
$I(Z)$ is obtained as follows. Let $\pi:X\to\pr^2$ be the
morphism to $\pr^2$ given by
blowing up the points $p_i$, and let
$L,E_1,\ldots, E_6$ be the corresponding
exceptional configuration. Let $F=-(m_1E_1+\cdots+m_6E_6)$.
Then $\C I_Z=\pi_*(\C O_X(-m_1E_1-\cdots-m_6E_6))$ is a sheaf
of ideals on $\pr^2$, and $I(Z)=\oplus_{i\ge0}H^0(\pr^2,\C I_Z\otimes\C O_{\pr^2}(i))$.
Also, we may as well assume that the coefficients $m_i$ satisfy the
proximity inequalities.
If they do not, there is another choice
of coefficients $m_i'$ which do satisfy them, giving a 0-cycle $Z'$
for which $I(Z)=I(Z')$. (The proximity inequalities are precisely
the conditions on the $m_i$ given by the inequalities
$F\cdot C\ge 0$ for each divisor class $C$ which is
the class of a component of the curves whose classes are
$E_1,\ldots,E_6$. In the case that the points $p_i$ are distinct,
the proximity inequalities are merely that $m_i\ge0$ for all $i$.
If $p_2$ is infinitely near $p_1$, then we would have the additional
requirement that $m_1\ge m_2$. This corresponds to the fact that
a form cannot vanish at $p_2$ without already vanishing at $p_1$.
If the $m_i$ do not satisfy the proximity inequalities, then
$F(Z,i)$ will never be nef: no matter how large  $i$ is, some
component $C$ of some $E_j$, $j>0$, will have
$F(Z,i)\cdot C<0$. Thus $C$ will be a fixed component of
$|F(Z,i)|$ for all $i$. By subtracting off such fixed components
one obtains a class $iL-(m_1'E_1+\cdots+m_6'E_6)$,
which also gives a 0-cycle $Z'=m_1'p_1+\cdots+m_6'p_6$
satisfying the proximity inequalities and which gives the same ideal
$I(Z)=I(Z')$. See \cite{refAppendix} for more details.)

The minimal free resolution of $I(Z)$ is an exact sequence of the form
$$0\to F_1\to F_0\to I(Z)\to 0$$
where each $F_i$ is a free graded $R$-module, with respect to the
usual grading of $R$ by degree, and all nonzero entries of the
matrix defining the homomorphism $F_1\to F_0$ are homogeneous
polynomials in $R$ of degree at least 1. Since $F_0$ and $F_1$ are
free graded $R$-modules, we know that there are integers $g_i$ and
$s_j$ such that $F_0\cong\oplus_i R[-i]^{g_i}$ and
$F_1\cong\oplus_j R[-j]^{s_j}$. These integers are the graded
Betti numbers of $I(Z)$. To determine the modules $F_1$ and $F_0$
up to graded isomorphism (or, equivalently, to determine the
graded Betti numbers of the minimal free resolution of $I(Z)$), it
is enough, as for distinct points (as explained in \cite{refGH}),
to determine $h^0(X,F(Z,i))$ and the ranks for all $i\ge0$ of the
multiplication maps $\mu_{Z,i} : I(Z)_i\otimes R_1\to I(Z)_{i+1}$
for each $i\ge0$, where, given a graded $R$-module $M$, $M_t$
denotes the graded component of degree $t$. Since (see
\cite{refGH}) the rank of $\mu_{Z,i}$ is the same as the rank of
$\mu_{F(Z,i)}: H^0(X,F(Z,i))\otimes H^0(X,L)\to H^0(X,L+F(Z,i))$,
it is enough to determine the rank of $\mu_{F(Z,i)}$.

As explained in \cite{refGH},
we can compute $h^0(X,F(Z,i))$ if we know $\hbox{NEG}(X)$
(or therefore even just $\hbox{neg}(X)$),
and we can compute the rank of $\mu_{F(Z,i)}$
if we can compute the rank of $\mu_F$ whenever $F$ is nef
(which the main result of this section, Theorem \ref{essdistpntsThm},
says we can do).

The method we use to prove Theorem \ref{essdistpntsThm} is precisely the method used in
\cite{refGH}. It involves the quantities
$q(F)=h^{0}({X},{F-E_1})$ and $l(F)=h^{0}({X},{F-(L-E_1})),$ and
bounds on the dimension of the cokernel of
$\mu_F$, defined in terms of quantities
$q^*(F)=h^{1}({X},{F-E_1})$ and $l^*(F)=h^{1}({X},{F-(L-E_1)})$,
introduced in \cite{refIGC} and \cite{refFHH}. A version of Lemma \ref{IGClem}
for distinct points is given in \cite{refIGC} and \cite{refFHH}, but
with only trivial changes the proof
for essentially distinct points is the same.

\begin{lem}\label{IGClem}
Let $X$ be obtained by blowing up
essentially distinct points $p_i\in\pr^2$, and let $F$ be the
class of an effective divisor on $X$ with $h^1(X,F)=0$. Then
$l(F)\le\hbox{dim ker $\mu_F$}\le q(F)+l(F)$ and $\hbox{dim cok
$\mu_F$}\le q^*(F)+l^*(F)$.
\end{lem}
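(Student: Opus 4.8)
The plan is to reinterpret $\ker\mu_F$ and the cokernel of $\mu_F$ as the cohomology of a single rank-two bundle on $X$, and then to split that bundle using $p_1$ and a line through it so that the four quantities $q(F)$, $l(F)$, $q^*(F)$, $l^*(F)$ appear on the nose. First I would use that $|L|$ is base point free (being the pullback of $|\mathcal O_{\pr^2}(1)|$), so that a basis $s_0,s_1,s_2$ of $H^0(X,L)$ defines a surjection $\mathcal O_X^{\,3}\to\mathcal O_X(L)$ whose kernel $\mathcal M$ is a rank-two bundle with $\det\mathcal M=\mathcal O_X(-L)$. Twisting $0\to\mathcal M\to\mathcal O_X^{\,3}\to\mathcal O_X(L)\to0$ by $\mathcal O_X(F)$ and passing to cohomology identifies the induced map $H^0(F)^3\to H^0(F+L)$ with $\mu_F$ (under $H^0(F)^3\cong H^0(F)\otimes H^0(L)$), so that $\ker\mu_F=H^0(X,\mathcal M(F))$ while the cokernel of $\mu_F$ is a subspace of $H^1(X,\mathcal M(F))$ (and equals it when $h^1(X,F)=0$). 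It then suffices to bound $h^0(\mathcal M(F))$ and $h^1(\mathcal M(F))$.

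The key step — and where the special geometry of $p_1$ enters — is to exhibit a sub-line-bundle of $\mathcal M$ adapted to $p_1$. Here I would take $s_0=e_1t_0$ and $s_1=e_1t_1$, where $e_1\in H^0(\mathcal O_X(E_1))$ cuts out the effective divisor $E_1$ and $t_0,t_1$ are a basis of the pencil $|L-E_1|$, completing to a basis of $H^0(L)$ by any $s_2\notin H^0(L-E_1)$. Since $t_1s_0-t_0s_1=e_1(t_1t_0-t_0t_1)=0$, the triple $(t_1,-t_0,0)$ is a global section of $\mathcal M\otimes\mathcal O_X(L-E_1)$; because $(L-E_1)^2=0$ one can choose $t_0,t_1$ with disjoint divisors, so this section vanishes nowhere and yields an inclusion $\mathcal O_X(E_1-L)\hookrightarrow\mathcal M$ with locally free quotient. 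Comparing determinants identifies the quotient as $\mathcal O_X(-E_1)$, producing the exact sequence $0\to\mathcal O_X(E_1-L)\to\mathcal M\to\mathcal O_X(-E_1)\to0$.

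Finally I would twist this sequence by $\mathcal O_X(F)$ and read the four bounds off the long exact cohomology sequence
\[0\to H^0(F-(L-E_1))\to H^0(\mathcal M(F))\to H^0(F-E_1)\to H^1(F-(L-E_1))\to H^1(\mathcal M(F))\to H^1(F-E_1)\to\cdots,\]
whose outer terms are exactly $H^0$ and $H^1$ of $F-(L-E_1)$ and of $F-E_1$. The first three terms give $l(F)\le\dim\ker\mu_F\le l(F)+q(F)$, and exactness at the middle $H^1$ gives $h^1(\mathcal M(F))\le l^*(F)+q^*(F)$, hence $\dim\,\hbox{cok}\,\mu_F\le q^*(F)+l^*(F)$. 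The main obstacle is precisely the construction in the second paragraph: once the ``$p_1$-adapted'' splitting $\mathcal O_X(E_1-L)\hookrightarrow\mathcal M$ is in hand, all four inequalities fall out of one exact sequence, so the crux is producing that sub-line-bundle. The only point requiring care for essentially distinct (rather than distinct) points is the behavior of $|L-E_1|$; but it remains a base point free pencil of self-intersection $0$ (it is the pullback of the ruling of the first blow-up), so disjoint general members exist and the nowhere-vanishing section is available. Notably, this argument never needs $E_1$ or the line's proper transform to be irreducible, which is why it adapts to infinitely near points with only this trivial change to the distinct-point proofs of \cite{refIGC} and \cite{refFHH}.
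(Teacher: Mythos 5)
Your proof is correct, but it is not the argument the paper has in mind. The paper's own (suppressed) proof is an elementary dimension count: it takes $y,z$ spanning $H^0(X,L-E_1)\cdot e_1\subset H^0(X,L)$ and a general $x$, computes $\dim\bigl(yH^0(F)+zH^0(F)\bigr)=2h^0(F)-l(F)$ from the identity $yH^0(F)\cap zH^0(F)=yzH^0(F-(L-E_1))$, then bounds the extra contribution of $xH^0(F)$ below by $h^0(F)-q(F)$ using the fact that everything in $yH^0(F)+zH^0(F)$ vanishes to order at least $F\cdot E_1+1$ along $E_1$; this gives $l(F)\le\dim\ker\mu_F\le l(F)+q(F)$, and the cokernel bound then follows by Riemann--Roch bookkeeping, namely $(l^*(F)-l(F))+(q^*(F)-q(F))=h^0(F+L)-3h^0(F)$, which is where the hypotheses that $F$ is effective and $h^1(X,F)=0$ are used. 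You instead work with the syzygy bundle $\mathcal M=\ker(\mathcal O_X^{\,3}\to\mathcal O_X(L))$ and its extension $0\to\mathcal O_X(E_1-L)\to\mathcal M\to\mathcal O_X(-E_1)\to 0$ coming from the nowhere-vanishing section $(t_1,-t_0,0)$; all the steps check out (in particular $|L-E_1|$ is the pullback of the free ruling pencil on the first blow-up, so distinct members are disjoint fibers and the section really is nowhere zero, and the quotient is identified by comparing determinants). Your route is tidier in that one long exact sequence delivers all four inequalities at once, and it actually proves slightly more: the kernel bounds and the cokernel bound $\dim\operatorname{cok}\mu_F\le h^1(\mathcal M(F))\le q^*(F)+l^*(F)$ need neither the effectivity of $F$ nor $h^1(X,F)=0$ (the latter is only needed if one wants equality $\operatorname{cok}\mu_F\cong H^1(\mathcal M(F))$). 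The paper's count is more pedestrian but avoids introducing the bundle $\mathcal M$; both hinge on the same underlying splitting of $H^0(X,L)$ adapted to the honest (non-infinitely-near) point $p_1$, which is exactly the point recorded in Remark \ref{IGCrem}.
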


\eatit{\Prf Let $x$ be a general linear form on $\pr^2$, pulled back to $X$.
Let $y$ and $z$ be general forms vanishing at $p_1$, pulled
back to $X$. Let $V$ be the vector space span of $y$ and $z$,
so $zH^0(X, F)+yH^0(X, F)$ is the image
of $H^0(X, F)\otimes V$ under $\mu_F$. Denoting
$h^0(X, F)$ by $h$, this image has dimension $2h-l(F)$,
since $zH^0(X, F)\cap yH^0(X, F)=zyH^0(X, F-(L-E_1))$,
where we regard the intersection as
taking place in $H^0(X, ((F\cdot L)+1)L)$.
Note that all elements of
$zH^0(X, F)+yH^0(X, F)$ correspond to forms
on $\pr^2$ that vanish at $p_1$ to order at least $F\cdot E_1+1$.
Thus $(zH^0(X, F)+yH^0(X, F))\cap xH^0(X, F)$
lies in the image of $xH^0(X, F-E_1)$ under the natural
inclusion $xH^0(X, F-E_1)\subseteq xH^0(X, F)$, so
$3h-l(F)\ge\hbox{dim Im $\mu_F$}\ge (2h-l(F)) + (h-q(F))$
hence $l(F)\le \hbox{dim ker $\mu_F$}\le l(F)+q(F)$.

For the bound on the cokernel,
note that $L$ is numerically effective.
Thus, $F\cdot L\ge 0$, since $F$ is effective.
Let $L$ be a general element of $|L|$.
>From $h^1({X},{F})=0$ and
$$0\to \C O_{X}({F})\to \C O_{X}({F+L})\to \C O_{L}({F+L})\to 0,$$
we see that $h^{1}({X},{F+L})$ also vanishes and we compute
$h^{0}({X},{F+L})-3h^{0}({X},{F})=2+F\cdot L-2h^{0}({X},{F})$.
Tensoring the displayed exact sequence by $\C O_X(-(L-E_1))$ (for $l$)
or $\C O_X(-E_1)$ (for $q$), taking
cohomology and using Riemann-Roch
gives $l^*(F)-l(F)=F\cdot (L-E_1) + 1-h^{0}({X},{F})$
and $q^*(F)-q(F)=F\cdot E_1 + 1-h^{0}({X},{F})$,
so $(l^*(F)-l(F))+(q^*(F)-q(F))=h^{0}({X},{F+L})-3h^{0}({X},{F})$.
Therefore, $\hbox{dim cok $\mu_F$}=\hbox{dim ker $\mu_F$}+
h^{0}({X},{F+L})-3h^{0}({X},{F})\le l(F)+q(F)+
h^{0}({X},{F+L})-3h^{0}({X},{F}) = l^*(F)+q^*(F)$, as claimed.
\qed}

\begin{rem}\label{IGCrem}
The quantities $q(F)$ and $l(F)$ are defined in terms of
$E_1$ and $L-E_1$, but in fact $E_j$, $j>0$, can
often be used in place of $j=1$. This is always true
if the points $p_i$ are distinct, since one can reindex the points.
Likewise, if the points are only essentially distinct,
any $j$ can be used so long as $p_j$ is a point on $\pr^2$, and not
only infinitely near a point of $\pr^2$.
\end{rem}

\begin{thm}\label{essdistpntsThm}
Let $X$ be obtained by blowing up
6 essentially distinct points of $\pr^2$. Let $L,E_1,\ldots,E_6$ be the
corresponding exceptional configuration. Assume that $-K_X$ is nef,
and let $F$ be a nef divisor.
Then $\mu_F$ has maximal rank.
\end{thm}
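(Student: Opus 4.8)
The plan is to adapt the argument of \cite{refGH} for distinct points, the essential new feature being the bookkeeping forced by infinitely near points. Since $H^0(X,L)$ is three-dimensional, saying that $\mu_F$ has maximal rank is the same as saying that $\mu_F$ is injective or surjective, i.e. that $\dim\ker\mu_F=0$ or $\dim\operatorname{cok}\mu_F=0$. Because $F$ is nef, Lemma \ref{NEGisfinite}(c) shows $F$ is effective with $h^1(X,F)=0$, so Lemma \ref{IGClem} applies and gives
$$\dim\ker\mu_F\le q(F)+l(F),\qquad \dim\operatorname{cok}\mu_F\le q^*(F)+l^*(F),$$
where $q(F)=h^0(X,F-E_1)$, $l(F)=h^0(X,F-L+E_1)$, and $q^*(F),l^*(F)$ are the corresponding first cohomology groups. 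Thus it suffices to establish, in the appropriate regime, either $q(F)=l(F)=0$ (forcing injectivity) or $q^*(F)=l^*(F)=0$ (forcing surjectivity). Here I use the freedom granted by Remark \ref{IGCrem} to replace the index $1$ by any $j$ with $p_j$ an honest point of $\pr^2$; at worst $j=1$ is always available since $p_1\in\pr^2$.

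To decide which alternative to pursue, I would first record a numerical identity. All of $F$, $F+L$, $E_1$ and $L-E_1$ are nef (a sum of nef classes being nef), so Lemma \ref{NEGisfinite}(c) supplies $h^1=h^2=0$ for $F$ and $F+L$; Riemann--Roch together with $K_X\cdot L=-3$ then yields $h^0(X,F+L)=h^0(X,F)+F\cdot L+2$. Consequently $\dim\operatorname{cok}\mu_F-\dim\ker\mu_F=h^0(X,F+L)-3h^0(X,F)$, a quantity one computes explicitly from $F^2$ and $F\cdot(-K_X)$. When it is nonnegative I aim for injectivity, and when it is nonpositive for surjectivity; the two bounds from Lemma \ref{IGClem} differ by exactly this same number, via the relations $q^*(F)-q(F)=F\cdot E_1+1-h^0(X,F)$ and $l^*(F)-l(F)=F\cdot(L-E_1)+1-h^0(X,F)$ recorded in the proof of that lemma, so the bounds are consistent with the intended target.

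The heart of the matter, and the step I expect to be the main obstacle, is the cohomology computation in each regime. In the surjective regime I must show $h^1(X,F-E_1)=0$ and $h^1(X,F-(L-E_1))=0$. The mechanism is that for $F$ suitably positive the classes $F-E_1$ and $F-(L-E_1)$ are themselves nef --- by Lemma \ref{NEGisfinite}(e) this is the finite list of inequalities $(F-E_1)\cdot C\ge0$ and $(F-(L-E_1))\cdot C\ge0$ for $C\in\hbox{NEG}(X)$ --- whereupon Lemma \ref{NEGisfinite}(c) gives the vanishing. The boundary cases, in which one of these classes meets some $C\in\hbox{NEG}(X)$ negatively, I would treat by peeling off that fixed negative curve through the sequence $0\to\mathcal{O}_X(G-C)\to\mathcal{O}_X(G)\to\mathcal{O}_C(G)\to0$ and inducting, using that $C^2\in\{-1,-2\}$, that $\hbox{NEG}(X)$ is finite and explicit (Lemma \ref{NEGisfinite}(a),(b)), and that a single such curve contributes no $h^1$. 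In the injective regime I must instead prove the non-effectivity $q(F)=l(F)=0$, which I would obtain from the same nef/negative-curve analysis of $F-E_1$ and $F-(L-E_1)$: here these classes fail to be effective, since the defining inequality for the regime (namely $h^0(X,F+L)-3h^0(X,F)\ge0$, equivalently $F\cdot(F-K_X-L)\le0$) forces $F$ to be small enough.

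Finally, verifying that these two regimes between them cover every nef $F$, and carrying out the negative-curve bookkeeping uniformly, is where the real work lies; this is exactly the computation done in \cite{refGH} for distinct points. The only genuinely new issue for essentially distinct points is that one is not free to reindex the $E_i$ arbitrarily, since the blow-up order is constrained when points are infinitely near; this is precisely the constraint governed by Remark \ref{IGCrem}, and since $p_1$ (and, more generally, any point lying on $\pr^2$ rather than strictly infinitely near another) may serve as the distinguished index, the distinct-points argument carries over with only these minor changes.
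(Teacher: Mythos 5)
There is a genuine gap. Your plan rests on a dichotomy: for each nef $F$ you aim to certify either injectivity via $q(F)=l(F)=0$ or surjectivity via $q^*(F)=l^*(F)=0$, with the sign of $h^0(X,F+L)-3h^0(X,F)$ telling you which to pursue. But this dichotomy fails for some nef classes, and the paper's proof explicitly isolates such a case: for $H=5L-2(E_1+\cdots+E_6)$ and $m>1$ one has $q(mH)+l(mH)>0$ \emph{and} $q^*(mH)+l^*(mH)>0$ (indeed $l(iH)>0$ for $i\ge3$ and $l^*(iH)>0$ for all $i$), so Lemma \ref{IGClem} never forces either injectivity or surjectivity, no matter how you reindex via Remark \ref{IGCrem}. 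Your proposed remedies (showing $F-E_1$ and $F-(L-E_1)$ are nef in the surjective regime, or non-effective in the injective regime) do not rescue this case, and you yourself defer ``where the real work lies'' to an unspecified computation. That deferred computation is the actual content of the proof.

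What the paper does instead is organize a finite verification over generators of the nef cone. Using Lemma 2.5 of \cite{refGH} one lists generators $\Gamma(X)$ for each of the 90 configuration types; one checks Lemma \ref{IGClem} directly on the ``bad'' sums $S_i(X)$ of at most three generators; Lemma 2.9 of \cite{refGH} confines the remaining bad classes to strings $F+(i-3)C_F$ with $F\in S_3(X)$, and Lemma 2.10 of \cite{refGH} propagates maximal rank along these strings by induction. The exceptional class $H=2(5L-2(E_1+\cdots+E_6))$ is handled ad hoc by writing $H=F+C$ with $C=2L-E_1-\cdots-E_5$, proving $\mu_F$ surjective from $q^*(F)+l^*(F)=0$, and then invoking Lemma 2.10 rather than the bounds of Lemma \ref{IGClem} to conclude $\mu_H$ is onto. (The paper also dispatches 28 types via Theorem 3.1.2 of \cite{refFreeRes} when a conic passes through all six points, and the empty type via \cite{refFi}.) Your framework is the right starting point --- Lemma \ref{IGClem} and the cohomology vanishing from Lemma \ref{NEGisfinite}(c) are indeed the engine --- but without the generator/string induction and the Lemma 2.10 mechanism for the classes where both bounds are positive, the argument does not close.
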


\begin{proof} The case of general points (i.e., that $\hbox{neg}(X)$ is empty)
is done in \cite{refFi} (but it can be recovered by the methods we use here).
This handles one of the 90 cases of Table \ref{configtable}.
Also, for 28 of the cases of Table \ref{configtable}, a conic goes through
the six points (i.e., $h^0(X, 2L-E_1-\cdots-2E_6)>0$); these cases are configuration
types 4, 8, 12, 16, 25, 26, 30, 33, 37, 42, 47, 48, 50, 53,
58, 61, 64, 66, 70, 72, 76, 78, 81, 83, 85, 88, 89 and 90.
The result holds for these cases by Theorem 3.1.2 of \cite{refFreeRes}
(also see Lemma 2.11 of \cite{refGH}).

Four of the remaining 61 cases correspond to distinct points,
and were handled in \cite{refGH}. These cases are 3, 9, 17 and 34.
The remaining cases are handled by the same method as these four.
The basic idea is this. If $F$ is a nef divisor such that $l(F)>0$, $q(F)>0$,
and $l^*(F)=0=q^*(F)$, then not only is it true that $\mu_F$ is surjective
(by Lemma \ref{IGClem}), but $l(F+G)>0$ and $q(F+G)>0$ by Lemma \ref{NEGisfinite},
and $l^*(F+G)=0=q^*(F+G)$ holds for all nef $G$
(by the proof of Corollary 2.8 of \cite{refGH}), hence
$\mu_{F+G}$ is surjective for all nef $G$.

Using Lemma 2.5 of \cite{refGH} one can easily give an explicit
list of generators of the nef cone for each configuration type. In
the best of all worlds, what would happen is that we would find
that $l(F)>0$, $q(F)>0$, $l^*(F)=0=q^*(F)$, for every $F$ in our
set of generators, and the result would be proved. But our world
is not the best of all imaginable worlds, so some additional work
is needed. In \cite{refGH} this is done, applying Corollary 2.8,
Lemma 2.9 and Lemma 2.10 of \cite{refGH}. These are all stated for
6 distinct points or $\pr^2$, but it is easy to check that the
proofs continue to hold for 6 essentially distinct points if
$-K_X$ is nef.

We now describe what this additional work is.
Let $\Gamma(X)$ be a set of generators
of the nef cone for $X$. (For practical purposes
of actually carrying out the calculations, it is best
to choose a minimal set of generators.)
Let $\Gamma_i(X)$ be the set of
all sums with exactly $i$ terms, where each term
is an element (with coefficient 1) of $\Gamma(X)$. Let
$S(X)$ be the set of all nef classes $F$ such that either
$q(F)=0$, $l(F)=0$ or $l^*(F)+q^*(F)>0$.
Then let $S_i(X)=S(X)\cap\Gamma_i(X)$; by
Corollary 2.8 \cite{refGH}, we have $S_{i+1}(X)\subseteq S_i(X)+S_1(X)$.
Typically the subsets $S_i(X)$ are nonempty.
But for $i\ge 3$, it always turns out that Lemma 2.9 \cite{refGH}
applies. This lemma involves a parameter $k$
which we can always take to be $k=2$. It also involves
a particular choice of class $C_F\in S_1(X)$ for each
$F\in S_i(X)$. The result is
that $S_i(X)\subseteq\{F+(i-3)C_F: F\in S_3(X), C_F\in S_1(X)\}$.

First one verifies directly that maximal rank holds
for $\mu_F$ for all $F\in S_i(X)$ for $i\le 3$, using Lemma \ref{IGClem}.
An induction (applying Lemma 2.10 \cite{refGH})
then verifies maximal rank for the strings $F+(i-3)C_F$, and hence
for all nef $F$.
There is one case that must be handled ad hoc (as was done by
\cite{refFi} and as we demonstrate below).
If $F=5L-2(E_1+\cdots+E_6)$, then $C_F=F$,
but $l(iF)>0$ for $i\ge3$ (so $\mu_{iF}$ is not injective by Lemma \ref{IGClem})
while $l^*(iF)>0$ for all $i$ (so the bounds
in Lemma \ref{IGClem} never force surjectivity). We now treat one case
in detail, as an example. The remaining cases are similar.

Consider configuration type 2, so $\hbox{neg}(X)=\{N\}$,
where $N=E_1-E_2$. Then $S_1(X)$ has 58
elements, $S_2(X)$ has 140, and $S_3(X)$, $S_4(X)$ and $S_5(X)$ have
150. Moreover, $\mu_H$ has maximal rank (by a case by case application of Lemma \ref{IGClem}) for
each element $H$ of $S_i(X)$, $1\le i\le 5$, except possibly
$mH$ when $H=5L-2(E_1+\cdots+E_6)$ for $m>1$ (since
$q(mH)+l(mH)>0$ and $q^*(mH)+l^*(mH)>0$ in these cases).
To show $\mu_H$ is onto for $H=2(5L-2(E_1+\cdots+E_6))$,
let $C=2L-E_1-\cdots-E_5$, and consider $F=H-C$.
Then $\mu_F$ is onto (by Lemma \ref{IGClem}, since $q^*(F)+l^*(F)=0$)
hence $\mu_H$ is onto (by Lemma 2.10 \cite{refGH}), and now
$\mu_{H+iC}$ is onto for all $i\ge0$ (also by Lemma 2.10 \cite{refGH}, taking
$F$ to be $mH$ and $C=5E_0-2(E_1+\cdots+E_6)$ for the induction in Lemma 2.10 \cite{refGH}).
By brute force check, applying Lemma 2.9 \cite{refGH} (with $k=2$ and $j=2$)
and Lemma 2.10 \cite{refGH}, it follows that $\mu_F$ has maximal rank
for every $F$ in each $S_i(X)$.
\end{proof}

\section{Examples}\label{exmpls}

Given only the configuration type and multiplicities
$m_1,\ldots,m_6$ satisfying the proximity inequalities,
Lemma \ref{NEGisfinite} and Theorem \ref{essdistpntsThm} allow us to determine
the Hilbert function and graded Betti numbers for $I(Z)$
for any fat point subscheme $Z=m_1p_1+\cdots+m_6p_6$
supported at 6 essentially distinct points $p_i$ of $\pr^2$
which when blown up give a surface $X$ for which $-K_X$ is nef (i.e., give a surface
isomorphic to the desingularization of a normal cubic surface).

The procedure for doing so is exactly the same as described
in detail in \cite{refGH}. We briefly recall the procedure.
Given $Z=m_1p_1+\cdots+m_6p_6$, to determine
$h_{I(Z)}(t)$, compute $h^0(X, F(Z,t))$,
where $F(Z,t)= tL-m_1E_1-\cdots-m_6E_6$.
To do this, let $D=F(Z,t)$
and check $D\cdot C$ for all prime divisors $C$ with $C^2<0$.
(Knowing the configuration type tells us the list of these
divisors $C$.) Whenever $D\cdot C<0$, replace
$D$ by $D-C$ and again check $D\cdot C$
with this new $D$ against all $C$. Eventually either
$D\cdot L<0$ (in which case $h^0(X, F(Z,t))=h^0(X, D)=0$),
or $D\cdot C\ge0$ for all $C$ (in which case,
by Lemma \ref{NEGisfinite}, $D$ is nef and $h^0(X, F(Z,t))=h^0(X, D)=(D^2-K_X\cdot D)/2+1)$.

To determine the graded Betti numbers, note that it suffices to compute
the Betti numbers $g_i$ for all $i$, since the exact sequence
$0\to F_1\to F_0\to I(Z)\to 0$ allows one to determine
$F_1$ up to graded isomorphism if one knows the graded Betti numbers
for $F_0$ and also the Hilbert function for $I(Z)$. To determine
$g_{t+1}$, note that $g_{t+1}=h^0(X, F(Z,t+1))$ if $h^0(X, F(Z,t))=0$.
If $h^0(X, F(Z,t))>0$, obtain the nef divisor $D$ from $F(Z,t)$ as above.
Then $g_{t+1}=(h^0(X, F(Z,t+1))-h^0(X,D+L))+\hbox{max}(0, h^0(X,D+L)-3h^0(X,D))$.

The procedure thus involves nothing more than taking dot products of integer vectors,
and can easily be done by hand. An awk script which automates the steps is available
at

\noindent \url{http://www.math.unl.edu/~bharbourne1/6ptsNef-K/Res6pointNEF-K}.

\noindent It can be run over the web at 

\noindent \url{http://www.math.unl.edu/~bharbourne1/6ptsNef-K/6reswebsite.html}.

Using this script we determined all possible Hilbert functions and graded Betti numbers for
fat points of the form $Z=p_1+\cdots+p_6$ and $2Z=2p_1+\cdots+2p_6$ for essentially distinct points
$p_i$ such that $-K_X$ is nef on the resulting surface $X$. For 6 essentially distinct
points with nef $-K_X$, this completely answers the questions raised
in \cite{refGMS}. We show what happens in Table \ref{ResHilbtable}. (The table regards 
a Hilbert function $h=h_{R/I(mZ)}$ as the sequence $h(0), h(1), h(2), \ldots$.
But any such $h$ reaches a maximum value at the regularity; i.e.,
for all $t$ greater than or equal to the regularity of $I(mZ)$, we have $h(t)=h(t+1)$.
Thus Table \ref{ResHilbtable} gives $h$ only up to this maximum value.)

In Table \ref{ResHilbtable}, case 1 occurs for the following configuration
types (as denoted in Table \ref{configtable}): 4, 8, 12, 16, 25, 26, 30, 33, 37, 42, 47,
48, 50, 53, 58, 61, 64, 66, 70, 72, 76, 78, 81, 83, 85, 88, 89, 90.
For each of these types, only one Hilbert function occurs for $2Z=2p_1+\cdots+2p_6$,
the one given as $1(a)$. These all have the same graded Betti numbers too.

Case 2 occurs for the remaining configuration
types: 1, 2, 3, 5, 6, 7, 9, 10, 11, 13, 14, 15, 17, 18, 19, 20, 21, 22, 23,
24, 27, 28, 29, 31, 32, 34, 35, 36, 38, 39, 40, 41, 43, 44, 45, 46, 49,
51, 52, 54, 55, 56, 57, 59, 60, 62, 63, 65, 67, 68, 69, 71, 73, 74, 75, 77, 79, 80, 82, 84, 86, 87.
For these, two different Hilbert functions occur for $2Z=2p_1+\cdots+2p_6$,
given as 2(a) and 2(b). Case 2(a) occurs for types 34, 68 and 87, and
these three all have the same graded Betti numbers.
Case 2(b) occurs for types 1, 2, 3, 5, 6, 7, 9, 10, 11, 13, 14, 15, 17, 18, 19,
20, 21, 22, 23, 24, 27, 28, 29, 31, 32, 35, 36, 38, 39, 40, 41, 43, 44, 45,
46, 49, 51, 52, 54, 55, 56, 57, 59, 60, 62, 63, 65, 67, 69, 71, 73, 74, 75, 77,
79, 80, 82, 84, and 86. These all have the same Hilbert function, but three different possibilities
occur for the graded Betti numbers, which we distinguish in the table by cases 2(b1), 2(b2) and 2(b3).
Case 2(b1) occurs for types 1, 2, 3, 5, 6, 7, 10, 11, 13, 14, 18, 19, 20, 21,
22, 27, 28, 31, 35, 36, 38, 43, 44, 49, 51, 54, 57, 59, 62, 73, 74 and 79.
Case 2(b2) occurs for types 9, 15, 23, 24, 29, 32, 39, 40, 46, 52, 55, 56, 60, 63, 67, 69, 71, 82 and 84,
and case 2(b3) occurs for  the remaining types
17, 41, 45, 65, 75, 77, 80 and 86.

\def\mytmprow#1#2#3#4{\hbox to\hsize{\hskip.2in\hbox to6.3in{\hbox to5.7in{\hbox to4.2in{\hbox to 1in{#1\hfil}#2\hfil}#3\hfil}#4\hfil}\hss}\par}
\vskip\baselineskip
\setcounter{table}{0}

\begin{table}
\hbox to\hsize{\hrulefill}
\mytmprow{{\bf Scheme}}{{\bf Resolution}}{{\bf Hilbert Function}}{}
\hbox to\hsize{\hrulefill}
\mytmprow{}{\hbox to1.3in{$F_1$\hfil} $F_0$}{$h_{R/I(mZ)}$, $m=1,2$}{}
%\hbox to\hsize{\leaders\hrule height 2pt\hfill}
\hbox to\hsize{\hrulefill}
\mytmprow{\hbox to .8in{\hbox to.2in{1:\hfil}\hfil$Z$}}{\hbox to1.3in{$R[-5]$\hfil} $R[-3]\oplus R[-2]$}{1, 3, 5, 6}{}
\mytmprow{\hbox to .8in{\hfil(a):\ \ $2Z$}}{\hbox to1.3in{$R[-8]\oplus R[-7]$\hfil} $R[-6]\oplus R[-5]\oplus R[-4]$}{1, 3, 6, 10, 14, 17, 18}{}
\hbox to\hsize{\hrulefill}
\mytmprow{\hbox to .8in{\hbox to.2in{2:\hfil}\hfil$Z$}}{\hbox to1.3in{$R[-4]^3$\hfil} $R[-3]^4$}{1, 3, 6}{}
\mytmprow{\hbox to .8in{\hfil(a):\ \ $2Z$}}{\hbox to1.3in{$R[-7]^4$\hfil} $R[-6]^4\oplus R[-4]$}{1, 3, 6, 10, 14, 18}{}
\mytmprow{\hbox to .8in{\hfil(b1):\ \ $2Z$}}{\hbox to1.3in{$R[-7]^3$\hfil} $R[-6]^1\oplus R[-5]^3$}{1, 3, 6, 10, 15, 18}{}
\mytmprow{\hbox to .8in{\hfil(b2):\ \ $2Z$}}{\hbox to1.3in{$R[-7]^3\oplus R[-6]$\hfil} $R[-6]^2\oplus R[-5]^3$}{1, 3, 6, 10, 15, 18}{}
\mytmprow{\hbox to .8in{\hfil(b3):\ \ $2Z$}}{\hbox to1.3in{$R[-7]^3\oplus R[-6]^2$\hfil} $R[-6]^3\oplus R[-5]^3$}{1, 3, 6, 10, 15, 18}{}
\hbox to\hsize{\hrulefill}
\vskip.05in
\caption[\hskip.1in Resolutions and Hilbert Functions]{Resolutions and Hilbert Functions}\label{ResHilbtable}
\end{table}

We close with one final example. The Hilbert functions that occur for $Z$ or $2Z$  for every choice
of 6 essentially distinct points $Z=p_1+\cdots+p_6\subset\pr^2$ all already occur
for distinct points. The first case of a Hilbert function that occurs for 6 essentially
distinct points $mZ$ of multiplicity $m$ that does not occur for any 6 distinct points of multiplicity $m$
is for $m=3$, and in this case there is only one, this being the Hilbert function for the ideal $I(Z)$ of 6 essentially
distinct points of multiplicity $3$ with configuration type 86, which is
$h_{I(Z)}(t)=0$ for $t<6$, $h_{I(Z)}(6)=1$, $h_{I(Z)}(7)=3$, and, 
for $t>7$, $h_{I(Z)}(t)=\binom{t+2}{2}-36$.
Applying the results of \cite{refGH}, we see
this Hilbert function does not occur for any configuration of 6 distinct points.
The graded Betti numbers for $I(Z)$ are such that $F_0\cong R[-9]^3\oplus R[-8]^3\oplus R[-6]$
and $F_1\cong R[-10]^3\oplus R[-9]^3$.

\end{document}